\newtheorem{theorem}{Theorem}
\newtheorem{corollary}[theorem]{Corollary}
\newtheorem{definition}[theorem]{Definition}
\newtheorem{example}[theorem]{Example}
\newtheorem{lemma}[theorem]{Lemma}
\newtheorem{problem}[theorem]{Problem}
\newtheorem{proposition}[theorem]{Proposition}
\newtheorem{remark}[theorem]{Remark}
\def\qed{\vbox{\hrule
 \hbox{\vrule\hbox to 5pt{\vbox to 8pt{\vfil}\hfil}\vrule}\hrule}}
\journal{xxxxxxx}
\begin{document}

\begin{frontmatter}


\title{Hamiltonian Realization of spectra}

\author[]{Cristina B. Manzaneda \corref{cor1}}
\ead{cmanzaneda@ucn.cl}

\author[]{Ricardo L. Soto}
\ead{rsoto@ucn.cl}

\cortext[cor1]{Corresponding autor.}

\address{Departamento de Matem\'{a}ticas, Universidad Cat\'{o}lica del Norte. Casilla 1280. Antofagasta, Chile.}




\begin{abstract}

A $2n\times 2n$ real matrix $A$ is said to be a Hamiltonian matrix if $A^{T}J+JA=0$, where $J=\left( 
\begin{array}{cc}
0 & I_{n} \\ 
-I_{n} & 0%
\end{array}%
\right)$. Hamiltonian matrices appear in many areas of applications, such as linear control theory, linear equations in continuous time systems, quadratic eigenvalue problems, and many other. In this paper we study the inverse eigenvalue problerm for Hamiltonian matrices. In particular, we give
sufficient conditions for the existence and construction of a Hamiltonian matrix with prescribed spectrum and we develop a Hamiltonian version of a perturbation result, which allow us to change $r<2n$ eigenvalues of a Hamiltonian matrix preserving its structure. Although our approach is of theoretical
nature, we also discuss an application of our results to the linear continuous-time system through the bisection method.
\end{abstract}

\begin{keyword}
Hamiltonian matrix, inverse problems, Hamiltonian Systems.
\MSC 15A18, 15A29, 15B99.
\end{keyword}

\end{frontmatter}
\section{Introduction}
Hamiltonian matrices arise in many applications related to
linear control theory for continuous-time systems \cite{Benner1}, quadratic eigenvalue problems \cite{Mermann1,Tiseur}. Deciding whether a certain Hamiltonian matrix $H$ has purely imaginary
eigenvalues is the most critical step in algorithms for computing the stability radius of a
matrix or the $H_\infty$ norm of a linear time-invariant system, (see, \cite{Boyd,Byers1}). QR-like algorithms that
achieve this goal have been developed in \cite{Benner2,Byers2,Van}, while Krylov subspace methods tailored
to Hamiltonian matrices can be found in \cite{Benner3,Benner4, Fern, Mermann2,Watkin}. An efficient strongly stable method for computing invariant subspaces of $H$ has been proposed in \cite{Chu}.


\begin{definition}
 A $2n\times 2n$ real matrix $A$ is said to be a Hamiltonian matrix if it satisfies $A^TJ+JA=0$, where $J=\left(\begin{array}{cc}
    0 & I_n \\
    -I_n & 0
\end{array}\right)$.  
\end{definition}

One of the main mathematical tools we shall use in this paper  is a rank-r perturbation result, due to Rado and introduced by Perfect in \cite{Perfect1}, which shows how to modify $r$ eigenvalues of an $n\times n$ matrix, $r<n$, via a rank-r perturbation, without changing any of the  remaining $n$ - $r$ eigenvalues. This result has given rise to a number of sufficient conditions for the existence and construction of nonnegative matrices with prescribed real or complex spectrum and also for the universal realizability of spectra, that is, spectra $\Lambda=\{\lambda_1,\ldots,\lambda_n\}$, which are realizable by a $n\times n$ nonnegative matrix for each Jordan canonical form associated with $\Lambda$ (see \cite{manza, Perfect1, Soto1,Soto2,Soto3, Soto4, SotoM} and the references therein).

\begin{theorem}[Rado, \cite{Perfect1}] \label{Rado} Let $A$ be an $n\times n $ arbitrary matrix with spectrum $\Lambda=\{\lambda_1,\lambda_2,\ldots, \lambda_n\}$. Let $X=[x_1|x_2|\cdots |x_r]$ be such that $rank(X)=r$ and $Ax_i=\lambda_i x_i$, $i=1,\ldots,r$, $r < n$. Let $C$ be an $r\times n$ arbitrary matrix. Then $A+XC$ has eigenvalues $\{\mu_1,\mu_2,\ldots,\mu_r,\lambda_{r+1},\ldots,\lambda_n\}$, where $\{\mu_1,\mu_2,\ldots,\mu_r\}$ are eigenvalues of the matrix $\Omega+CX$ with $\Omega=diag\{\lambda_1,\ldots, \lambda_r\}$.
\end{theorem}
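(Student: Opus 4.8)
The plan is to diagonalize the problem by a similarity transformation built from $X$, exploiting that the columns of $X$ span an $A$-invariant subspace. First I would record the eigenvector hypothesis in matrix form: since $Ax_i=\lambda_i x_i$ for $i=1,\ldots,r$, we have $AX=X\Omega$ with $\Omega=\mathrm{diag}\{\lambda_1,\ldots,\lambda_r\}$. Because $\mathrm{rank}(X)=r<n$, I can complete $X$ to an invertible $n\times n$ matrix $S=[\,X\mid Y\,]$, where $Y$ is any $n\times(n-r)$ matrix making $S$ nonsingular.

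Next I would partition the inverse conformally as
\[
S^{-1}=\begin{pmatrix} U \\ V \end{pmatrix},
\]
with $U$ of size $r\times n$ and $V$ of size $(n-r)\times n$. The identity $S^{-1}S=I_n$ yields the four block relations $UX=I_r$, $UY=0$, $VX=0$, and $VY=I_{n-r}$; these are the workhorses of the argument. Using $AX=X\Omega$ together with $UX=I_r$ and $VX=0$, a direct block computation gives
\[
S^{-1}AS=\begin{pmatrix} UAX & UAY \\ VAX & VAY \end{pmatrix}
=\begin{pmatrix} \Omega & UAY \\ 0 & VAY \end{pmatrix}.
\]
Since similarity preserves the spectrum and this matrix is block upper triangular, its eigenvalues are those of $\Omega$, namely $\lambda_1,\ldots,\lambda_r$, together with those of $VAY$; hence the eigenvalues of $VAY$ must be exactly $\lambda_{r+1},\ldots,\lambda_n$.

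The decisive step is to apply the same conjugation to the perturbed matrix. The key observation is that $S^{-1}X=\begin{pmatrix} UX \\ VX \end{pmatrix}=\begin{pmatrix} I_r \\ 0 \end{pmatrix}$, so with $CS=[\,CX\mid CY\,]$ the perturbation transforms as
\[
S^{-1}(XC)S=\begin{pmatrix} I_r \\ 0 \end{pmatrix}\bigl[\,CX\mid CY\,\bigr]
=\begin{pmatrix} CX & CY \\ 0 & 0 \end{pmatrix}.
\]
Adding this to $S^{-1}AS$ gives
\[
S^{-1}(A+XC)S=\begin{pmatrix} \Omega+CX & UAY+CY \\ 0 & VAY \end{pmatrix},
\]
which is again block upper triangular. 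Reading off the diagonal blocks, the spectrum of $A+XC$ is the union of the spectrum of $\Omega+CX$, namely $\{\mu_1,\ldots,\mu_r\}$, and the spectrum of $VAY$, namely $\{\lambda_{r+1},\ldots,\lambda_n\}$, which is precisely the claim. I expect the only delicate point to be verifying the block relations and the two block multiplications carefully; conceptually everything is forced once one notices that $\mathrm{span}\{x_1,\ldots,x_r\}$ is $A$-invariant, so that both $A$ and the rank-$r$ update $XC$ become block upper triangular in the adapted basis.
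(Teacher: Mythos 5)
Your proof is correct, and it is essentially the same argument the paper itself uses: although the paper cites Perfect for Theorem \ref{Rado} without proof, its proof of the Hamiltonian version (Theorem \ref{RadoH}) proceeds exactly as you do, forming $S=[X|Y]$ with $S^{-1}=\left[\frac{U}{V}\right]$, deriving the block relations $UX=I_r$, $VX=0$, $UY=0$, $VY=I_{n-r}$, and reading the spectrum off the resulting block upper triangular matrix. Your additional remark identifying $\sigma(VAY)=\{\lambda_{r+1},\ldots,\lambda_n\}$ (as a multiset, by comparing with $\sigma(A)$) is a worthwhile explicit step that the paper leaves implicit.
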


The case $r=1$ in Theorem \ref{Rado} constitutes a well known result due to Brauer (\cite{Brauer},[Theorem 27]), also employed with success in connection with  the \textit{Nonnegative Inverse Eigenvalue Problem} (NIEP) and the \textit{Nonnegative Inverse Elementary Divisors Problem} (NIEDP), or nonegative  inverse universal realizability problem.

A number of different versions of Rados's Theorem have been obtained in \cite{manza,Soto2, Soto4}. In particular in \cite{Soto2} the authors introduce a symmetric version of Theorem \ref{Rado}.

In this paper, we develop a Hamiltonian version of Rado's Theorem, which allows us to modify $r$ eigenvalues, $r<2n$, of a Hamiltonian matrix by preserving its Hamiltonian structure.

We shall say that $\Lambda=\{\lambda_1,\lambda_2,\ldots, \lambda_{2n}\}$ is $\mathcal{H}$-realizable if there exists a $2n\times 2n$ real Hamiltonian matrix with spectrum $\Lambda$. It is customary to use  the notation $\sigma(A)$ to represent the spectrum of a matrix $A$.

The following properties of Hamiltonian matrices are well know \cite{Benner4}.
\begin{proposition}\cite{Benner4}\label{intro1}
The following are equivalent:
\begin{description}
\item[a)] $A$ is a Hamiltonian matrix.
\item[b)] $A=JS$, where $S=S^T$.
\item[c)] $(JA)^T=JA$.
\item[d)] $A=\left(\begin{array}{cc}
    C & G \\
    F & -C^T
\end{array}\right)$, where $G=G^T$ and $F=F^T$. 
\end{description}
\end{proposition}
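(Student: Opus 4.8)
The plan is to build everything on the two elementary identities $J^{T}=-J$ and $J^{2}=-I_{2n}$, both of which are read off directly from the block form of $J$. Since condition (c) is algebraically the most flexible, I would treat it as a hub and establish (a)$\Leftrightarrow$(c), (b)$\Leftrightarrow$(c), and (c)$\Leftrightarrow$(d) separately, rather than chasing a single cyclic chain of implications.

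For (a)$\Leftrightarrow$(c) I would simply transpose $JA$: using $J^{T}=-J$ gives $(JA)^{T}=A^{T}J^{T}=-A^{T}J$, so the symmetry requirement $(JA)^{T}=JA$ is equivalent to $-A^{T}J=JA$, that is, to $A^{T}J+JA=0$, which is exactly the definition in (a). For (b)$\Leftrightarrow$(c) I would use $J^{2}=-I_{2n}$. If $A=JS$ with $S=S^{T}$, then $JA=J^{2}S=-S$ is symmetric, so (c) holds; conversely, assuming (c), the matrix $S:=-JA$ is symmetric and satisfies $JS=-J^{2}A=A$, which yields the factorization in (b). This direction also makes the role of $S$ transparent: it is, up to sign, nothing but $JA$.

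For (c)$\Leftrightarrow$(d) I would carry out the one genuine computation. Writing $A$ in $n\times n$ blocks as $\left(\begin{array}{cc} C & G \\ F & D \end{array}\right)$, one finds $JA=\left(\begin{array}{cc} F & D \\ -C & -G \end{array}\right)$, whose transpose is $\left(\begin{array}{cc} F^{T} & -C^{T} \\ D^{T} & -G^{T} \end{array}\right)$. Equating $JA$ with $(JA)^{T}$ block by block yields $F=F^{T}$, $G=G^{T}$, and $D=-C^{T}$, together with the off-diagonal relation $D^{T}=-C$; this last relation is not an additional constraint but follows from $D=-C^{T}$ by transposition. Renaming $D=-C^{T}$ then puts $A$ in exactly the form prescribed in (d).

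I do not anticipate a serious obstacle, since the statement is a purely structural characterization and every step reduces to a short manipulation once the identities $J^{T}=-J$ and $J^{2}=-I_{2n}$ are available. The only points that genuinely demand attention are keeping the signs straight when transposing expressions involving $J$, and verifying in the block computation that the two off-diagonal conditions collapse to the single relation $D=-C^{T}$; both are routine but easy to mishandle.
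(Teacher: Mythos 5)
The paper does not prove this proposition at all: it is quoted as a known fact with a citation to Benner, Kressner and Mehrmann, so there is no in-paper argument to compare against. Your proof is correct and self-contained, and the hub strategy through (c) is a sensible organization, since both (a) and (b) are one-line consequences of $J^{T}=-J$ and $J^{2}=-I_{2n}$ respectively, while (d) isolates the only block computation. All the signs check out: $(JA)^{T}=-A^{T}J$ gives (a)$\Leftrightarrow$(c); $S=-JA$ is the right choice for (c)$\Rightarrow$(b) (note $S=JA$ would also do, since $J(JA)=-A$ only differs by sign and $-S$ is symmetric exactly when $S$ is, but your normalization $A=JS$ matches the statement as written); and in the block computation $JA=\left(\begin{array}{cc} F & D \\ -C & -G \end{array}\right)$ is correct, with the two off-diagonal conditions $D=-C^{T}$ and $D^{T}=-C$ collapsing to one, as you observe. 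One small presentational point: since you phrase the (c)$\Leftrightarrow$(d) step as ``equating block by block,'' each equality is genuinely an equivalence, so the converse direction (d)$\Rightarrow$(c) is already covered; it would be worth one sentence making that explicit, e.g.\ that for $A$ of the form in (d) the matrix $JA=\left(\begin{array}{cc} F & -C^{T} \\ -C & -G \end{array}\right)$ is visibly symmetric. With that remark added, the proof is complete.
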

Let $\mathcal{H}^n=\{A\, :\, A^TJ+JA=0\}$. It is clear that if $A,B\in\mathcal{H}^n $, and $\alpha\in \mathbf{R}$ then, $A+\alpha B\in \mathcal{H}^n$.

\begin{proposition}\cite{Benner4}\label{intro2}
Let $A\in \mathcal{H}^n$ and let $p_A(x)$ be the characteristic polynomial of $A$. Then:
\begin{description}
\item[a)] $p_A(x)=p_A(-x)$.
\item[b)] If $p_A(c)=0$, then $p_A(-c)=p_A(\overline{c})=p_A(-\overline{c})=0$. 
\end{description}
\end{proposition}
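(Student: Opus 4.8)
The plan is to derive both statements from a single structural observation: a Hamiltonian matrix $A$ is similar to $-A^T$. Starting from the defining relation $A^TJ+JA=0$, I would rewrite it as $A^T=-JAJ^{-1}$, and since $J^2=-I$ gives $J^{-1}=-J$, this exhibits $-A^T=JAJ^{-1}$. Thus $A$ and $-A^T$ are similar and therefore share the same characteristic polynomial. (One could equally start from Proposition \ref{intro1}b), writing $A=JS$ with $S=S^T$, but deriving the similarity directly from the definition is the cleanest route.)

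For part a), I would exploit this similarity together with two elementary facts: the characteristic polynomial is invariant under transposition, and the matrix has even size $2n$. Concretely, $p_A(x)=p_{-A^T}(x)=\det(xI+A^T)=\det(xI+A)$, while directly $p_A(-x)=\det(-xI-A)=(-1)^{2n}\det(xI+A)=\det(xI+A)$, where the sign disappears precisely because the dimension is even. Comparing the two computations yields $p_A(x)=p_A(-x)$, i.e. $p_A$ is an even polynomial.

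For part b), I would combine the evenness from a) with the fact that $A$ is a real matrix, so $p_A$ has real coefficients. If $p_A(c)=0$, then evaluating the identity $p_A(x)=p_A(-x)$ at $x=c$ immediately gives $p_A(-c)=0$; reality of the coefficients gives $p_A(\overline{c})=0$; and applying the evenness identity once more at $x=\overline{c}$ gives $p_A(-\overline{c})=0$. Hence all four numbers are roots.

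The argument is short and I do not anticipate a genuine obstacle; the only point requiring care is the bookkeeping of signs, namely correctly using $J^{-1}=-J$ to pass from $A^T=-JAJ^{-1}$ to a similarity between $A$ and $-A^T$, and noting that the cancellation $(-1)^{2n}=1$ in part a) relies essentially on the even dimension of a Hamiltonian matrix.
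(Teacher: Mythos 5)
Your proposal is correct, and there is nothing in the paper to compare it against: Proposition \ref{intro2} is quoted from \cite{Benner4} as a known fact, with no proof supplied in the text. Your argument is the standard one and every step checks out. From $A^TJ+JA=0$ and $J^{-1}=-J$ you correctly obtain $-A^T=JAJ^{-1}$, so $p_A(x)=p_{-A^T}(x)=\det(xI+A^T)=\det(xI+A)$, while $p_A(-x)=\det(-xI-A)=(-1)^{2n}\det(xI+A)$; the even dimension $2n$ is indeed exactly what makes the sign vanish, and you are right to flag that this is where the Hamiltonian structure (which forces even size) enters part a) beyond the similarity itself. Part b) then follows, as you say, from evenness together with the realness of the coefficients of $p_A$, applied at $c$ and at $\overline{c}$. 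One small remark: your similarity $A\sim -A^T$ is equivalent to the identity $A^T=JAJ$ implicit in Proposition \ref{intro1}, so your route from the definition is as direct as the alternative via $A=JS$, $S=S^T$, which would give $p_A(x)=\det(xI-JS)$ and evenness via $\det(xI-JS)=\det(xI-S J)$-type manipulations; your version avoids those extra determinant identities.
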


The paper is organized as follows: In Section 2,  we show how to construct Hamiltonian matrices with prescribed spectrum. In section 3, we introduce a Hamiltonian version of Rado's Theorem, and based on it, we modify $r$ eigenvalues, $r<2n$, of a Hamiltonian matrix by preserving its the Hamiltonian structure. Finally, in Section 4, we discuss an application to a linear condi\-tions time system. Throughout the paper some illustrative examples are presented.

\section{Hamiltonian matrices with prescribed spectrum}
We start this section with some results and criteria related to the Hamiltonian inverse eigenvalue problem.

\begin{theorem}\label{blockteo}
Let $M=\left(\begin{array}{cc}
    A_{11} & A_{12} \\
    A_{12} & A_{11}
\end{array}\right)$, where $A_{ij}$ are $n\times n$ matrices. Then $\sigma(M)=\sigma(A_{11}+A_{12})\cup\sigma(A_{11}-A_{12})$
\end{theorem}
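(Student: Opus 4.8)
The plan is to exhibit an explicit similarity transformation that block-diagonalizes $M$ into the two blocks $A_{11}+A_{12}$ and $A_{11}-A_{12}$; since similar matrices share the same spectrum, the claim then follows at once. The natural candidate, suggested by the symmetric block pattern of $M$ (equal diagonal blocks, and equal off-diagonal blocks), is the $2n\times 2n$ matrix
$$P=\left(\begin{array}{cc} I_n & I_n \\ I_n & -I_n \end{array}\right).$$
First I would verify that $P$ is invertible. A direct computation gives $P^2=2I_{2n}$, so that $P^{-1}=\tfrac12 P$; in particular $P$ is nonsingular, and hence conjugation by $P$ preserves the spectrum.

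Next I would compute $P^{-1}MP=\tfrac12\,P M P$. Multiplying $M$ on the right by $P$ combines the block-columns, and multiplying on the left by $\tfrac12 P$ combines the block-rows. Carrying out the two block products, the equality of the two diagonal blocks of $M$ together with the equality of the two off-diagonal blocks forces the off-diagonal blocks of the result to cancel, leaving
$$P^{-1}MP=\left(\begin{array}{cc} A_{11}+A_{12} & 0 \\ 0 & A_{11}-A_{12} \end{array}\right).$$
Since the spectrum of a block-diagonal matrix is the union of the spectra of its diagonal blocks (counting multiplicities), this identity yields $\sigma(M)=\sigma(P^{-1}MP)=\sigma(A_{11}+A_{12})\cup\sigma(A_{11}-A_{12})$.

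There is no genuine obstacle here beyond recognizing the correct $P$: the entire argument reduces to a routine block-matrix multiplication. The only step deserving a moment's care is the cancellation of the off-diagonal blocks of $P^{-1}MP$, and this is precisely where the structural hypotheses on $M$ are used. Were the two diagonal blocks distinct, or the two off-diagonal blocks distinct, the conjugate $P^{-1}MP$ would fail to be block diagonal and the clean splitting of the spectrum would no longer hold.
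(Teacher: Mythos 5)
Your proof is correct and follows essentially the same route as the paper: conjugation by an explicit involution-type block matrix $P$ (yours is $\left(\begin{smallmatrix} I & I \\ I & -I \end{smallmatrix}\right)$, the paper's is $\left(\begin{smallmatrix} I & -I \\ I & I \end{smallmatrix}\right)$, which differ only by a trivial column permutation and sign) to block-diagonalize $M$ into $A_{11}+A_{12}$ and $A_{11}-A_{12}$, then taking the union of spectra. The identity $P^{-1}=\tfrac12 P$ you verify is a minor convenience the paper handles by writing $P^{-1}$ out directly, but the argument is the same.
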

\begin{proof}
Let $P=\left(\begin{array}{cc}
    I_n & -I_n \\
    I_n & I_n
\end{array}\right)$, then $P^{-1}=\frac{1}{2}\left(\begin{array}{cc}
    I_n & I_n \\
    -I_n & I_n
\end{array}\right)$. It is easy to see that 
$$
P^{-1}MP=\left(\begin{array}{cc}
    A_{11}+A_{12}& 0\\
     0& A_{11}-A_{12}
\end{array}\right)
$$
Hence, $\sigma(M)=\sigma(A_{11}+A_{12})\cup\sigma(A_{11}-A_{12})$.
\end{proof}
\vspace{0.3cm}
A matrix $A$ is called anti-symmetric if $A^T=-A$.
\begin{corollary}\label{cororeal}
Let $\Lambda=\{\lambda_1,\ldots,\lambda_n\}$ be the spectrum of an $n\times n$ real matrix $A$. Then $\Lambda\cup -\Lambda$ is $\mathcal{H}$-realizable.
\end{corollary}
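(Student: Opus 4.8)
The plan is to realize $\Lambda\cup-\Lambda$ directly by a block matrix of the type treated in Theorem \ref{blockteo}, exploiting the splitting of $A$ into its symmetric and anti-symmetric parts. Write $A=A_{11}+A_{12}$ with
$$
A_{11}=\tfrac12\left(A-A^T\right), \qquad A_{12}=\tfrac12\left(A+A^T\right),
$$
so that $A_{11}$ is anti-symmetric and $A_{12}$ is symmetric, and set
$$
M=\left(\begin{array}{cc} A_{11} & A_{12}\\ A_{12} & A_{11}\end{array}\right).
$$

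First I would verify that $M$ is Hamiltonian using the characterization in Proposition \ref{intro1}(d). Its $(1,1)$ block is $A_{11}$, and since $A_{11}^T=-A_{11}$, the $(2,2)$ block $A_{11}$ equals $-A_{11}^T$, exactly as the pattern of Proposition \ref{intro1}(d) demands. The two off-diagonal blocks both equal $A_{12}$, which is symmetric, so the $G$ and $F$ blocks are symmetric. Hence $M\in\mathcal{H}^n$.

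Next I would read off the spectrum from Theorem \ref{blockteo}. Since $M$ has equal diagonal blocks $A_{11}$ and equal off-diagonal blocks $A_{12}$, the theorem gives $\sigma(M)=\sigma(A_{11}+A_{12})\cup\sigma(A_{11}-A_{12})$. By construction $A_{11}+A_{12}=A$, so $\sigma(A_{11}+A_{12})=\Lambda$, while $A_{11}-A_{12}=-A^T$, whose spectrum is $-\sigma(A^T)=-\sigma(A)=-\Lambda$. Therefore $\sigma(M)=\Lambda\cup-\Lambda$, and $M$ is the required Hamiltonian realization.

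There is essentially no computational obstacle; the only point needing care is matching $M$ to the Hamiltonian pattern of Proposition \ref{intro1}(d), and this is precisely where the anti-symmetry of $A_{11}$ is used, since it forces the $(2,2)$ block to equal $-A_{11}^T$. A cruder alternative would be the block-diagonal matrix $\mathrm{diag}(A,-A^T)$, which is trivially Hamiltonian with spectrum $\Lambda\cup-\Lambda$; but the construction above stays within the block format of Theorem \ref{blockteo} and motivates the symmetric/anti-symmetric decomposition recalled just before the statement.
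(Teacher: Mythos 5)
Your proof is correct and follows exactly the paper's argument: the same splitting $A_{11}=\frac{1}{2}(A-A^T)$, $A_{12}=\frac{1}{2}(A+A^T)$, the same block matrix, and the same appeal to Theorem \ref{blockteo}, with the Hamiltonian structure check (which the paper leaves implicit) spelled out via Proposition \ref{intro1}(d). The only addition is your remark about the alternative $\mathrm{diag}(A,-A^T)$, which is a valid and even simpler realization, but your main argument coincides with the paper's.
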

\begin{proof}
We write $A$ as $A=\frac{1}{2}(A+ A^T)+\frac{1}{2}(A-A^T)$. Then  for $A_{11}=\frac{1}{2}(A-A^T)$ and $A_{12}=\frac{1}{2}(A+ A^T)$, we have from Theorem \ref{blockteo} that 
$$
H=\left(\begin{array}{cc}
    A_{11} & A_{12} \\
    A_{12} & A_{11}
\end{array}\right)
$$
 is a Hamiltonian matrix with spectrum $\Lambda\cup -\Lambda$.
\end{proof}


\begin{remark}
Let $H$ be a Hamiltonian matrix with $\sigma(H)\subset \mathbb{R}$. Then, it follows from Proposition \ref{intro2}, that, $\sigma(H)=\Lambda\cup -\Lambda$, where $\Lambda\subset\mathbb{R}$. Reciprocally, from Corollary \ref{cororeal} we have that every list of the form $\Lambda\cup -\Lambda$, where $\Lambda\subset\mathbb{R}$, is the spectrum of a Hamiltonian matrix.
\end{remark}

\begin{theorem}\label{blocks}
Let $\{\Gamma_k\}_{k=1}^n$ be a family of lists $\mathcal{H}$-realizable. Then $\Gamma=\cup_{k=1}^{n}\Gamma_k$ is also $\mathcal{H}$-realizable.  
\end{theorem}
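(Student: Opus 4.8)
The plan is to realize each $\Gamma_k$ by its own Hamiltonian matrix and then assemble these into a single larger Hamiltonian matrix whose spectrum is exactly the union. The essential point — and the one place where a naive argument breaks down — is that the ordinary block-diagonal sum $H_1\oplus\cdots\oplus H_n$ is generally \emph{not} Hamiltonian with respect to the canonical $J$ of the larger size, because $J$ couples the first half of the coordinates with the second half. To get around this I would not form a direct sum; instead I would build the large matrix directly in the form guaranteed by Proposition \ref{intro1}(d).

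First, since each $\Gamma_k$ is $\mathcal{H}$-realizable, fix a Hamiltonian matrix $H_k$ of size $2m_k\times 2m_k$ with $\sigma(H_k)=\Gamma_k$, and write it via Proposition \ref{intro1}(d) as
\[
H_k=\begin{pmatrix} C_k & G_k \\ F_k & -C_k^{T}\end{pmatrix},\qquad G_k=G_k^{T},\ \ F_k=F_k^{T}.
\]
Put $N=\sum_{k=1}^{n}m_k$ and form the $N\times N$ block-diagonal matrices $C=\mathrm{diag}(C_1,\dots,C_n)$, $G=\mathrm{diag}(G_1,\dots,G_n)$ and $F=\mathrm{diag}(F_1,\dots,F_n)$. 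Define the $2N\times 2N$ matrix
\[
H=\begin{pmatrix} C & G \\ F & -C^{T}\end{pmatrix}.
\]
The second step is to verify that $H$ is Hamiltonian: $G$ and $F$ are symmetric because each $G_k,F_k$ is, and $-C^{T}=\mathrm{diag}(-C_1^{T},\dots,-C_n^{T})$, so $H$ has precisely the block form of Proposition \ref{intro1}(d) and is therefore Hamiltonian.

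The final step, which is the only one requiring genuine care, is to identify $\sigma(H)$. I claim $H$ is permutation-similar to the direct sum $H_1\oplus\cdots\oplus H_n$. Concretely, let $\pi$ be the permutation of $\{1,\dots,2N\}$ that groups together, for each $k$, the $m_k$ coordinates of the top block belonging to $H_k$ with the corresponding $m_k$ coordinates of the bottom block, and let $P$ be the associated permutation matrix. Because $C$, $G$ and $F$ are all block-diagonal, no coupling ever mixes coordinates coming from different indices $k$, and one obtains $P^{-1}HP=\bigoplus_{k=1}^{n}H_k$. Since permutation similarity preserves the spectrum,
\[
\sigma(H)=\bigcup_{k=1}^{n}\sigma(H_k)=\bigcup_{k=1}^{n}\Gamma_k=\Gamma,
\]
and $H$ is Hamiltonian, so $\Gamma$ is $\mathcal{H}$-realizable. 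The main obstacle is thus entirely the bookkeeping of this shuffle permutation; an equivalent route that avoids writing $\pi$ explicitly is to prove the statement for $n=2$ and then induct, checking the single $2\times2$-block reshuffle by hand.
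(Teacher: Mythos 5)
Your proof is correct and follows essentially the same route as the paper: realize each $\Gamma_k$ in the block form of Proposition \ref{intro1}(d), assemble block-diagonal $C$, $G$, $F$ into one large Hamiltonian matrix, and identify its spectrum via permutation similarity with the direct sum $H_1\oplus\cdots\oplus H_n$. You even make explicit the one point the paper leaves implicit (that the naive direct sum is not Hamiltonian for the canonical $J$, whence the shuffle permutation), so nothing is missing.
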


\begin{proof}
Let $H_k$ be a Hamiltonian matrix with spectrum $\Gamma_k$, $k=1,2,\ldots,n$. Then, from Proposition \ref{intro2}  
$$
H_k=\left(\begin{array}{cc}
   A_k & E_k \\
   F_k & -A_k^T\\
\end{array}\right), 
$$ 
where $E_k=E_k^T$ and $F_k=F_k^T$. Thus for $M=diag\{H_1, H_2, \ldots,H_n\}$, $\sigma(M)=\cup_{k=1}^{n}\Gamma_k$ and by adequate permutations of rows and columns we have
$$
P\left(\begin{array}{ccccccc}
   A_1 & E_1    &     &       &       &    &   \\
   F_1 & -A_1^T &     &       &       &    &   \\
       &        & A_2 & E_2   &       &    &    \\
       &        & F_2 & -A_2^T&       &    &    \\
       &        &     &       &\ddots &    &    \\
       &        &     &       &       & A_n& E_n \\
       &        &     &       &       & F_n&-A_n^T\\  
\end{array}\right)P= \left(\begin{array}{cc}
   A & E \\
   F & -A^T\\
\end{array}\right),
$$
where $A=diag\{A_1,A_2,\ldots,A_n\}$, $E=diag\{E_1,E_2,\ldots,E_n\}$ and\newline $F=diag\{F_1,F_2,\ldots,F_n\}$. So it is clear that $E=E^T$ and $F=F^T$. 

Hence, $H=\left(\begin{array}{cc}
   A & E \\
   F & -A^T\\
\end{array}\right)$ is a Hamiltonian matrix such that $\sigma(H)=\sigma(M)=\cup_{k=1}^{n}\Gamma_k$.
\end{proof}

\begin{corollary}
Let $\Gamma=\{\pm ib_1, \pm ib_2,\ldots, \pm ib_n\}$, $b_i>0$, be a list of complex number. Then $\Gamma$ is $\mathcal{H}$-realizable.   
\end{corollary}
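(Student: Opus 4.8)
The plan is to reduce the given list to its individual conjugate pairs and then glue the pieces back together with Theorem \ref{blocks}. First I would write $\Gamma = \bigcup_{k=1}^{n}\Gamma_k$, where $\Gamma_k = \{ib_k, -ib_k\}$. By Theorem \ref{blocks}, a union of $\mathcal{H}$-realizable lists is again $\mathcal{H}$-realizable, so it suffices to show that each two-element list $\Gamma_k$ is the spectrum of a single $2\times 2$ Hamiltonian matrix.

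Next I would exhibit such a block explicitly. Taking $C=0$, $G=b_k$ and $F=-b_k$ in part d) of Proposition \ref{intro1} — all of which are $1\times 1$ and hence trivially symmetric — produces the matrix $H_k=\begin{pmatrix} 0 & b_k \\ -b_k & 0 \end{pmatrix}$, which is Hamiltonian by construction. A one-line computation of its characteristic polynomial gives $\lambda^2+b_k^2$, so that $\sigma(H_k)=\{ib_k,-ib_k\}=\Gamma_k$, as $b_k>0$.

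Finally, applying Theorem \ref{blocks} to the family $\{\Gamma_k\}_{k=1}^{n}$ shows that $\Gamma=\bigcup_{k=1}^{n}\Gamma_k$ is $\mathcal{H}$-realizable; concretely, the block-diagonal assembly of the $H_k$, after the permutation of rows and columns described in the proof of that theorem, is a $2n\times 2n$ Hamiltonian matrix whose spectrum is exactly $\Gamma$.

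There is no serious obstacle here: the only points requiring care are the verification that the $2\times 2$ building block satisfies the Hamiltonian condition (immediate from Proposition \ref{intro1}d, since scalar blocks are automatically symmetric) and that its spectrum is the desired purely imaginary pair. It is worth noting that the tempting shortcut through Corollary \ref{cororeal} does not apply directly, since $\{ib_1,\ldots,ib_n\}$ is not the spectrum of a real matrix unless the $b_k$ already occur in $\pm$ pairs; this is precisely why decomposing $\Gamma$ into conjugate pairs and invoking Theorem \ref{blocks} is the natural route.
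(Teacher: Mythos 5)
Your proposal is correct and follows essentially the same route as the paper: the paper also realizes each pair $\{ib_k,-ib_k\}$ by the block $B_k=\left(\begin{array}{cc}0 & b_k\\ -b_k & 0\end{array}\right)$ and then invokes Theorem \ref{blocks} to assemble the $2n\times 2n$ Hamiltonian matrix. If anything, your verification of the Hamiltonian property via Proposition \ref{intro1}(d) is cleaner than the paper's, which states the condition as $B_kJ+JB_k=0$ (a typo for $B_k^{T}J+JB_k=0$).
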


\begin{proof}
The Hamiltonians matrices $B_k=\left(\begin{array}{cc}
   0    & b_k \\
   -b_k & 0\\
\end{array}\right)$ have the spectrum $\{ib,-ib\}$. Then $B_kJ+JB_k=0$ and from Theorem \ref{blocks} there is a $2n\times 2n$ Hamiltonian matrix with spectrum $\Gamma$.
\end{proof}
\vspace{0.3cm}
In the case of general lists of complex numbers, the smallest list of complex numbers being $\mathcal{H}$-realizable must be of the form
$$
\Lambda(a,b)=\{a+ib,a-ib,-a-ib,-a+ib\},
$$
which is spectrum of the Hamiltonian matrix
$$
\left(\begin{array}{cccc}
     a  & b   & 0  &  0\\
    -b  & a   & 0  &  0\\
     0  & 0   & -a &  b\\
     0  & 0   & -b & -a\\
\end{array}\right)
$$
Then, from Theorem \ref{blocks}, the list of complex number $\cup_{k=1}^n\Lambda(a_k,b_k)$ is $\mathcal{H}$-realizable. Reciprocally, every $\mathcal{H}$-realizable list $\Lambda\subset \mathbb{C}$ is of the form $\cup_{k=1}^n\Lambda(a_k,b_k)$.

Then, a list of the form
$$
(\Lambda\cup-\Lambda)\cup\cup_{k=1}^n\Lambda(a_k,b_k),
$$
where $\Lambda\subset \mathbb{R}$ have $2n$ elements, is $\mathcal{H}$-realizable.

\begin{example}
We consider $\Lambda=\{1\pm i,-1\mp i,1\mp2i,-1\mp 2i\}$, that is, $\Lambda=\Lambda(1,1)\cup\Lambda(1,2)$. Then we can construct the following Hamiltonian matrix with  the desired spectrum.
$$
A=\left(\begin{array}{cccccccccccccccc}
     1  &  1  & 0  &  0 & 0 & 0 & 0 & 0\\
    - 1 &  1  & 0  &  0 & 0 & 0 & 0 & 0\\
     0  &  0  & 1  &  2 & 0 & 0 & 0 & 0\\
     0  &  0  & -2 &  1 & 0 & 0 & 0 & 0\\
     0  &  0  & 0  &  0 & -1& 1 & 0 & 0\\
     0  &  0  & 0  &  0 & -1& -1& 0 & 0\\
     0  &  0  & 0  &  0 &  0&  0& -1& 2\\
     0  &  0  & 0  &  0 &  0&  0& -2& -1\\
 \end{array}\right)
$$
Furthermore, it's clear that:

$$
\left(
\begin{array}{cccccccccccccccc}
0 & 0 & 0 & 0 & 0 & 0 & 0 & 0 & 2 & 0 & 0 & 0 & 0 & 0 & 0 & 0 \\ 
0 & 0 & 0 & 0 & 0 & 0 & 0 & 0 & 0 & 1 & 0 & 0 & 0 & 0 & 0 & 0 \\ 
0 & 0 & 0 & 0 & 0 & 0 & 0 & 0 & 0 & 0 & -\frac{1}{2} & 0 & 0 & 0 & 0 & 0 \\ 
0 & 0 & 0 & 0 & 0 & 0 & 0 & 0 & 0 & 0 & 0 & 1 & 0 & 0 & 0 & 0 \\ 
0 & 0 & 0 & 0 & 1 & 1 & 0 & 0 & 0 & 0 & 0 & 0 & 0 & 0 & 0 & 0 \\ 
0 & 0 & 0 & 0 & -1 & 1 & 0 & 0 & 0 & 0 & 0 & 0 & 0 & 0 & 0 & 0 \\ 
0 & 0 & 0 & 0 & 0 & 0 & 1 & 2 & 0 & 0 & 0 & 0 & 0 & 0 & 0 & 0 \\ 
0 & 0 & 0 & 0 & 0 & 0 & -2 & 1 & 0 & 0 & 0 & 0 & 0 & 0 & 0 & 0 \\ 
2 & 0 & 0 & 0 & 0 & 0 & 0 & 0 & 0 & 0 & 0 & 0 & 0 & 0 & 0 & 0 \\ 
0 & 1 & 0 & 0 & 0 & 0 & 0 & 0 & 0 & 0 & 0 & 0 & 0 & 0 & 0 & 0 \\ 
0 & 0 & -\frac{1}{2} & 0 & 0 & 0 & 0 & 0 & 0 & 0 & 0 & 0 & 0 & 0 & 0 & 0 \\ 
0 & 0 & 0 & 1 & 0 & 0 & 0 & 0 & 0 & 0 & 0 & 0 & 0 & 0 & 0 & 0 \\ 
0 & 0 & 0 & 0 & 0 & 0 & 0 & 0 & 0 & 0 & 0 & 0 & -1 & 1 & 0 & 0 \\ 
0 & 0 & 0 & 0 & 0 & 0 & 0 & 0 & 0 & 0 & 0 & 0 & -1 & -1 & 0 & 0 \\ 
0 & 0 & 0 & 0 & 0 & 0 & 0 & 0 & 0 & 0 & 0 & 0 & 0 & 0 & -1 & 2 \\ 
0 & 0 & 0 & 0 & 0 & 0 & 0 & 0 & 0 & 0 & 0 & 0 & 0 & 0 & -2 & -1%
\end{array}
\right)
$$
have eigenvalues: $\{-\frac{1}{2},2,\frac{1}{2},-2,-1+i,-1-i,1+i,1-i,-1+%
\allowbreak 2i,-1-2i,1+2i,1-2i,1,-1,1,-1\}$
\end{example}

\begin{remark}
Observe that the above construction requires the same number of real elements than the number of complex elements. 
\end{remark}

For the general case we have the following result:

\begin{theorem}
$\Lambda\subset \mathbb{C}$ is $\mathcal{H}$-realizable if only if 
$$
\Lambda=\cup_{k=1}^{p}(\Lambda_k\cup -\Lambda_k)\cup _{t=p+1}^{n}(\Lambda_t\cup -\Lambda_t),
$$
where $\Lambda_k\subset \mathbb{R}$ and the $\Lambda_t$ are complex lists such that $\overline{\Lambda_t}=\Lambda_t$.
\end{theorem}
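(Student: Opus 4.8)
The plan is to prove the two implications separately, drawing the necessity from the spectral symmetries recorded in Proposition \ref{intro2} and the sufficiency from the constructions already available in Corollary \ref{cororeal} and Theorem \ref{blocks}. The underlying principle is that the characteristic polynomial of a real Hamiltonian matrix is even and has real coefficients, so its spectrum is invariant under both $c\mapsto\overline c$ and $c\mapsto -c$; the asserted decomposition is just a bookkeeping of this invariance into ``real'' blocks $\Lambda_k\cup-\Lambda_k$ and ``genuinely complex'' blocks $\Lambda_t\cup-\Lambda_t$.

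For sufficiency I would argue as follows. Suppose $\Lambda$ has the stated form. Each real list $\Lambda_k\subset\mathbb R$ is the spectrum of the real diagonal matrix $\mathrm{diag}(\Lambda_k)$, so Corollary \ref{cororeal} makes $\Lambda_k\cup-\Lambda_k$ $\mathcal H$-realizable. Each complex list $\Lambda_t$ satisfies $\overline{\Lambda_t}=\Lambda_t$, hence is the spectrum of a real matrix (for instance the companion matrix of the real polynomial $\prod_{\mu\in\Lambda_t}(x-\mu)$, or a block-diagonal real matrix with a $2\times2$ block $\left(\begin{smallmatrix} a & b\\ -b & a\end{smallmatrix}\right)$ for each conjugate pair $a\pm bi$ and a $1\times1$ block for each real entry), so Corollary \ref{cororeal} again gives that $\Lambda_t\cup-\Lambda_t$ is $\mathcal H$-realizable. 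Finally, the whole list $\Lambda$ is a finite union of these $\mathcal H$-realizable blocks, and Theorem \ref{blocks} assembles them into a single Hamiltonian matrix with spectrum $\Lambda$.

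For necessity, suppose $\Lambda=\sigma(H)$ with $H$ a real Hamiltonian matrix. Because $H$ is real, $\Lambda$ is closed under conjugation, and by Proposition \ref{intro2} it is closed under negation as well; thus $\Lambda$ is invariant under the four-element group generated by $c\mapsto-c$ and $c\mapsto\overline c$. I would split $\Lambda$ into its real and non-real parts. The real part is a negation-closed real list, which can be written as $\Lambda_1\cup-\Lambda_1$ by choosing one representative from each pair $\{r,-r\}$; here the eigenvalue $0$ causes no trouble, since Proposition \ref{intro2} forces $p_H$ to be even and hence $0$ to occur with even multiplicity. The non-real part I would decompose orbit by orbit: an eigenvalue $a+bi$ with $a\neq0$ has orbit $\Lambda(a,b)=\Lambda_t\cup-\Lambda_t$ with the self-conjugate choice $\Lambda_t=\{a+bi,\,a-bi\}$, while a purely imaginary eigenvalue $bi$ contributes the pair $\{bi,-bi\}$, which is itself self-conjugate and negation-invariant and so already has the required block form. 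Collecting these blocks yields the stated decomposition.

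The step I expect to demand the most care is the necessity direction, specifically the clean bookkeeping of the orbit structure under the combined negation--conjugation symmetry while respecting multiplicities: the purely imaginary eigenvalues, where the conjugation and negation symmetries coincide, and the zero eigenvalue must be handled so that the decomposition is genuinely exhaustive. The sufficiency direction, by contrast, is essentially immediate once one observes that every conjugation-closed complex list is the spectrum of a real matrix, after which Corollary \ref{cororeal} and Theorem \ref{blocks} do all the work.
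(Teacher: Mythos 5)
Your sufficiency argument is correct and uses the same ingredients as the paper, assembled slightly differently: the paper realizes $\cup_{k}\Lambda_k\cup\cup_{t}\Lambda_t$ by a single real block-diagonal matrix $M=\mathrm{diag}\{A_1,\ldots,A_p,B_{p+1},\ldots,B_n\}$ and writes down the Hamiltonian matrix $\left(\begin{smallmatrix} M & 0\\ 0 & -M^T\end{smallmatrix}\right)$ in one step (Hamiltonian by Proposition \ref{intro1}(d) with $G=F=0$), whereas you realize each block $\Lambda_k\cup-\Lambda_k$ and $\Lambda_t\cup-\Lambda_t$ separately through Corollary \ref{cororeal} and then glue with Theorem \ref{blocks}. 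Both routes rest on the same observation, namely that every self-conjugate list is the spectrum of a real matrix, so this difference is cosmetic.

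The genuine problem is in your necessity direction, at exactly the point you flag as delicate and then wave away. You assert that a purely imaginary pair $\{bi,-bi\}$ ``already has the required block form,'' but it does not. If $\overline{\Lambda_t}=\Lambda_t$ (as lists, i.e.\ with multiplicities, which is the reading this literature requires), then the multiplicity of $-bi$ in $\Lambda_t$ equals the multiplicity of $bi$ in $\Lambda_t$, so $bi$ occurs in $\Lambda_t\cup-\Lambda_t$ with \emph{even} multiplicity; summing over $t$, and noting that no real block $\Lambda_k\cup-\Lambda_k$ can contribute, every purely imaginary eigenvalue of a list of the stated form has even multiplicity. Consequently $\{i,-i\}$, which is certainly $\mathcal{H}$-realizable (it is the spectrum of $J$ itself, or of $\left(\begin{smallmatrix}0&1\\-1&0\end{smallmatrix}\right)$), admits no decomposition of the stated kind: the only candidates $\Lambda_t=\{i\}$ or $\Lambda_t=\{-i\}$ are not self-conjugate. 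So your orbit bookkeeping cannot be completed whenever a purely imaginary pair occurs with odd multiplicity, and no rearrangement fixes this --- the obstruction is a parity count, not a choice of representatives.

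In fairness, this is a defect of the theorem as stated rather than of your argument alone: the paper's entire necessity proof is the sentence ``the first implication is verified immediately through Proposition \ref{intro2},'' which glosses over precisely the same case, while its sufficiency proof coincides in substance with yours. The statement (and both proofs of necessity) become correct if one additionally allows blocks with $\overline{\Lambda_t}=-\Lambda_t$, so that an odd-multiplicity imaginary pair is covered by $\Lambda_t=\{bi\}$ with $\Lambda_t\cup-\Lambda_t=\Lambda_t\cup\overline{\Lambda_t}=\{bi,-bi\}$; sufficiency for such blocks is already supplied by the paper's corollary realizing $\{\pm ib_1,\ldots,\pm ib_n\}$ via the matrices $\left(\begin{smallmatrix}0&b\\-b&0\end{smallmatrix}\right)$. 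With that amendment your necessity argument goes through verbatim, and your treatment of the zero eigenvalue (even multiplicity because $p_H$ is even) is correct and more careful than anything the paper records.
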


\begin{proof}
The first implication is verified immediately through the Proposition \ref{intro2}. Reciprocally, let $\Lambda=\cup_{k=1}^{p}(\Lambda_k\cup -\Lambda_k)\cup _{t=p+1}^{n}(\Lambda_t\cup -\Lambda_t)$. Each $\Lambda_k$ is realizable for some real matrix $A_k$, and   without loss of generality, we assume that each $\Lambda_t$ is realizale by a real matrix $B_t$ (we may take $\Lambda_t=\{a_t+ib_t,a_t-ib_t\}$ if it necessary). We define the matrix
$$
H=\left(\begin{array}{cccc}
    A    & 0   &       &     \\
     0   & B   &       &     \\
         &     & -A^T  &  0  \\
         &     &   0& -B^T   \\
\end{array}\right)
$$
where $A=diag\{A_1\ldots,A_p\}$ and $B=diag\{B_{p+1},\ldots,B_n\}$, which is a Hamiltonian matrix with spectrum $\Lambda$.
\end{proof}

\section{Perturbations results}
In this section  we prove a Hamiltonian version  of Theorem \ref{Rado}. As the superscript $T$, in $A^T$, denotes the transpose of $A$, we define the superscript $\mathcal{H}$, in $A^\mathcal{H}$, in the following way
$$
A^\mathcal{H}=JA^TJ,
$$
and $A^\mathcal{H}$ will be called the Hamiltonian transpose or $\mathcal{H}$-transpose.
\begin{remark}
Since, $J=\left(\begin{array}{cc}
    0 & I_n \\
    -I_{n} & 0
\end{array}\right)$, the above definition implies that $A$ is a Hamiltonian matrix if only if, $A^\mathcal{H}=A$. However, if the matrix $A$ is of order $2m\times 2n$, then
$$
A^\mathcal{H}=J_n A^T J_m,
$$
where  $J_n=\left(\begin{array}{cc}
    0 & I_n \\
    -I_{n} & 0
\end{array}\right)$ and  $J_m=\left(\begin{array}{cc}
    0 & I_m \\
    -I_{m} & 0
\end{array}\right)$.
\end{remark}

The following properties are straightforward: 
$$
J^T=-J,\,\,\,\, J^\mathcal{H}=J,\,\,\,\, J^2=-I
$$
Let $A$ and $B$  matrices of order $2n\times 2n$. Then it is easy to verify the following properties:

\begin{description}
     \item[1)] $(AB)^\mathcal{H}=-B^\mathcal{H}A^\mathcal{H}.$
     
     \item[2)] $(A+B)^\mathcal{H}=A^\mathcal{H}+B^\mathcal{H}.$
     
     \item[3)] $(\alpha A)^\mathcal{H}=\alpha A^\mathcal{H}.$
     
     \item[4)] $(A^T)^\mathcal{H}=(A^\mathcal{H})^T.$
     
     \item[5)] $(A^\mathcal{H})^\mathcal{H}=A.$
\end{description}
\begin{lemma}\label{propied}
If $A$ and $B$ are Hamiltonian matrices of the same order, then $i)$ $\alpha A+\beta B$, with $\alpha,\beta \in \mathbb{R}$, $ii)$ $A^{-1}$ if $A^{-1}$ exists, $iii)$ $A^T$, $iv)$ $A^\mathcal{H}$ and $v)$ $JAJ$, are Hamiltonian matrices.
\end{lemma}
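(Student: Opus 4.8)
The plan is to use the characterization recorded in the Remark above, namely that a square matrix $M$ of even order is Hamiltonian if and only if $M^{\mathcal{H}}=M$, and to reduce each of the five assertions to a short computation with the $\mathcal{H}$-transpose. For each candidate matrix $M$ built from $A$ (and $B$) I would compute $M^{\mathcal{H}}$ using properties $1)$--$5)$ together with the identities $J^{T}=-J$, $J^{\mathcal{H}}=J$ and $J^{2}=-I$, and check that it returns $M$. Throughout I may use the standing hypotheses $A^{\mathcal{H}}=A$ and $B^{\mathcal{H}}=B$.

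Three of the items are immediate. For $i)$, properties $2)$ and $3)$ give $(\alpha A+\beta B)^{\mathcal{H}}=\alpha A^{\mathcal{H}}+\beta B^{\mathcal{H}}=\alpha A+\beta B$, so real linear combinations of Hamiltonian matrices are Hamiltonian. For $iii)$, property $4)$ gives $(A^{T})^{\mathcal{H}}=(A^{\mathcal{H}})^{T}=A^{T}$. For $iv)$, property $5)$ gives $(A^{\mathcal{H}})^{\mathcal{H}}=A=A^{\mathcal{H}}$, the last equality because $A$ is Hamiltonian. Each of these is a one-line verification once the relevant property is in hand.

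The remaining two need the multiplicative property $1)$. For $v)$, I would write $JAJ=(JA)J$ and apply $1)$ twice: $(JAJ)^{\mathcal{H}}=-J^{\mathcal{H}}(JA)^{\mathcal{H}}=-J^{\mathcal{H}}(-A^{\mathcal{H}}J^{\mathcal{H}})=J^{\mathcal{H}}A^{\mathcal{H}}J^{\mathcal{H}}$, and then $J^{\mathcal{H}}=J$ together with $A^{\mathcal{H}}=A$ collapse this to $JAJ$; the two sign flips introduced by $1)$ cancel. Alternatively one may compute $(JAJ)^{T}=A$ directly from $J^{T}=-J$ and feed it into the definition $M^{\mathcal{H}}=JM^{T}J$.

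The one place that requires a little care — and the step I expect to be the main obstacle — is $ii)$, because $1)$ is an \emph{anti}-homomorphism carrying a minus sign, so inverses do not pass through the $\mathcal{H}$-transpose in the naive way. The key observation is that $I^{\mathcal{H}}=JI^{T}J=J^{2}=-I$. Applying $1)$ to $AA^{-1}=I$ then gives $-(A^{-1})^{\mathcal{H}}A^{\mathcal{H}}=I^{\mathcal{H}}=-I$, whence $(A^{-1})^{\mathcal{H}}A^{\mathcal{H}}=I$ and therefore $(A^{-1})^{\mathcal{H}}=(A^{\mathcal{H}})^{-1}=A^{-1}$, using $A^{\mathcal{H}}=A$ at the last step. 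Thus the minus sign in $1)$ is exactly absorbed by the minus sign in $I^{\mathcal{H}}=-I$, and $A^{-1}$ is Hamiltonian whenever it exists.
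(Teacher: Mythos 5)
Your proposal is correct and takes essentially the same approach as the paper: each item is verified by checking $M^{\mathcal{H}}=M$ via the listed properties of the $\mathcal{H}$-transpose together with $A^{\mathcal{H}}=A$ and $B^{\mathcal{H}}=B$, exactly as the paper does for $i)$, $iii)$, $iv)$ and $v)$. The only cosmetic difference is in $ii)$: the paper computes $(A^{-1})^{\mathcal{H}}=J(A^{-1})^{T}J=(J^{-1}A^{T}J^{-1})^{-1}=(JA^{T}J)^{-1}=(A^{\mathcal{H}})^{-1}=A^{-1}$ directly, whereas you reach the same key identity $(A^{-1})^{\mathcal{H}}=(A^{\mathcal{H}})^{-1}$ by applying the anti-multiplicativity property to $AA^{-1}=I$ and observing $I^{\mathcal{H}}=-I$ --- both handle the sign in property $1)$ correctly.
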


\begin{proof} Since $A$ and $B$ are Hamiltonian matrices, then $A^\mathcal{H}=A$ and $B=B^\mathcal{H}$. Therefore
\begin{description}
\item[i)] $(\alpha A+\beta B)^\mathcal{H}=J(\alpha A+\beta B)^T J=\alpha A^\mathcal{H}+\beta B^\mathcal{H}=\alpha A+\beta B$.
\item[ii)] $(A^{-1})^\mathcal{H}=J(A^{-1})^TJ=(J^{-1}A^TJ^{-1})^{-1}=(JA^TJ)^{-1}=(A^\mathcal{H})^{-1}=A^{-1}$.
\item[iii)] $(A^T)^\mathcal{H}=(A^\mathcal{H})^T=(A)^T$.
\item[iv)] $(A^\mathcal{H})^\mathcal{H}=A=A^\mathcal{H}$.
\item[v)] $(JAJ)^\mathcal{H}=J^\mathcal{H}A^\mathcal{H}J^\mathcal{H}=JAJ$. 
\end{description}
\end{proof}

\begin{lemma}\label{lemma}
Let $C$ be a Hamiltonian matrix of order $r\times r$ and let $X$ be a  matrix of order $n\times r$. Then the matrix $XCX^\mathcal{H}$ is a Hamiltonian matrix. 
\end{lemma}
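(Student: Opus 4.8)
The plan is to verify the defining identity directly: by the earlier Remark, a matrix $M$ is Hamiltonian precisely when $M^\mathcal{H}=M$, so it suffices to show $(XCX^\mathcal{H})^\mathcal{H}=XCX^\mathcal{H}$. I would treat $XCX^\mathcal{H}$ as a product and strip off the $\mathcal{H}$-transpose by repeated use of the anti-multiplicativity rule $(AB)^\mathcal{H}=-B^\mathcal{H}A^\mathcal{H}$ (property 1) together with the involution $(A^\mathcal{H})^\mathcal{H}=A$ (property 5), finishing with the hypothesis $C^\mathcal{H}=C$.

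Concretely, grouping $XCX^\mathcal{H}=(XC)X^\mathcal{H}$ and applying property 1 gives $(XCX^\mathcal{H})^\mathcal{H}=-(X^\mathcal{H})^\mathcal{H}(XC)^\mathcal{H}=-X(XC)^\mathcal{H}$, where I used the involution on $X^\mathcal{H}$. A second application of property 1 yields $(XC)^\mathcal{H}=-C^\mathcal{H}X^\mathcal{H}$, and since $C$ is Hamiltonian, $C^\mathcal{H}=C$. Substituting, the two sign changes cancel and I recover $(XCX^\mathcal{H})^\mathcal{H}=-X(-CX^\mathcal{H})=XCX^\mathcal{H}$, which is exactly the Hamiltonian condition. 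The entire argument is a short piece of sign bookkeeping once the algebraic laws are available.

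The one point demanding care, and the main obstacle, is that properties 1--5 were stated for square $2n\times 2n$ matrices, whereas here $X$ is rectangular; thus I must first confirm that anti-multiplicativity and the involution persist for the dimension-subscripted $\mathcal{H}$-transpose $A^\mathcal{H}=J_nA^TJ_m$ introduced in the Remark. Both reduce to the single identity $J_k^2=-I$. For anti-multiplicativity, with $A$ of order $2m\times 2n$ and $B$ of order $2n\times 2p$, one computes $-B^\mathcal{H}A^\mathcal{H}=-(J_pB^TJ_n)(J_nA^TJ_m)=J_pB^TA^TJ_m=(AB)^\mathcal{H}$, the crucial cancellation being $J_n^2=-I$. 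A parallel computation using $J_k^T=-J_k$ and $J_k^2=-I$ confirms $(A^\mathcal{H})^\mathcal{H}=A$ in the rectangular setting. Once these rectangular versions are recorded, the chain of equalities above goes through verbatim, and the dimensions match so that $XCX^\mathcal{H}$ is square of the correct even order.
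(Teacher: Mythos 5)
Your proof is correct, and while it lands on the same identity the paper verifies, it organizes the computation differently. The paper's own proof never cites properties 1)--5); it expands $(XCX^\mathcal{H})^T=(XCJ_rX^TJ_n)^T$ by hand, manipulates the $J$ factors directly (inserting $J_rJ_r=-I$ and using $J_rC^TJ_r=C^\mathcal{H}=C$) to arrive at $(XCX^\mathcal{H})^T=-J_nXCJ_rX^T$, and then conjugates: $(XCX^\mathcal{H})^\mathcal{H}=J_n(-J_nXCJ_rX^T)J_n=XCJ_rX^TJ_n=XCX^\mathcal{H}$. You instead factor the argument through rectangular versions of the anti-multiplicativity rule $(AB)^\mathcal{H}=-B^\mathcal{H}A^\mathcal{H}$ and the involution $(A^\mathcal{H})^\mathcal{H}=A$, verified first for the subscripted transpose $A^\mathcal{H}=J_nA^TJ_m$. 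Crucially, you identified exactly the point the paper implicitly works around: properties 1)--5) are stated only for square $2n\times 2n$ matrices, so they cannot be cited off the shelf when $X$ is rectangular, and your preliminary verification (each law reducing to $J_k^2=-I$ and $J_k^T=-J_k$) closes that gap --- it is in substance the same $J$-cancellation the paper performs inline. Your sign bookkeeping is right: $(XCX^\mathcal{H})^\mathcal{H}=-X(XC)^\mathcal{H}=-X(-C^\mathcal{H}X^\mathcal{H})=XCX^\mathcal{H}$. What your modular route buys is reusability: once the rectangular laws are recorded, one gets for free such facts as $(ABC)^\mathcal{H}=C^\mathcal{H}B^\mathcal{H}A^\mathcal{H}$ for conformable even-sized matrices, whereas the paper's inline expansion is shorter but single-use. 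One cosmetic remark, shared with the paper: for $J_r$ and $J_n$ to exist at all, $r$ and $n$ must be even, a parity hypothesis that both you and the authors leave implicit in the statement of the lemma.
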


\begin{proof} Since $X^\mathcal{H}=J_r X^T J_n$, then

\begin{eqnarray*}
(XCX^\mathcal{H})^T&=&(XCJ_rX^TJ_n)^T=J_n^TXJ_r^TC^TX^T \\
&=& -J_nX(-J_r)C^TX^T=-J_nXJ_rC^TJ_rJ_rX^T \\
&=& -J_nXCJ_rX^T
\end{eqnarray*}
and 

\begin{eqnarray*}
(XCX^\mathcal{H})^\mathcal{H}&=&J_n(XCX^\mathcal{H})^TJ_n=J_n(-J_nXCJ_rX^T)J_n \\
&=& XCJ_rX^TJ_n=XCX^\mathcal{H} \\
\end{eqnarray*}
and therefore $XCX^\mathcal{H}$ is a Hamiltonian matrix. 
\end{proof}
\vspace{0.3cm}
The following result gives a Hamiltonian version of Rado's result.

\begin{theorem}\label{RadoH}
Let $A$ be a $n\times n$ Hamiltonian matrix  with spectrum $\Lambda=\{\lambda_1,\lambda_2,\ldots,\lambda_n\}$, and for some $r < n$, let $\{x_1,x_2,\ldots,x_r\}$ be a set  of eigenvectors of $A$ corresponding to $\lambda_1,\ldots,\lambda_r$, respectively. Let $X$ be the $n\times r$ matrix with $i-$th column $x_i$ and $rank(X)=r$. Let $\Omega=diag\{\lambda_1,\ldots,\lambda_r\}$, and let $C$ be an $r\times r$ Hamiltonian matrix. Then, the matrix $A+XCX^\mathcal{H}$ is Hamiltonian with eigenvalues $\{\mu_1,\mu_2,\ldots,\mu_r,\lambda_{r+1},\ldots,\lambda_n\}$, where $\mu_1,\mu_2,\ldots,\mu_r$ are eigenvalues of $B=\Omega+CX^\mathcal{H}X$.  
\end{theorem}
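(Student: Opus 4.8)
The plan is to split the statement into two independent parts: the structural claim that $A+XCX^{\mathcal{H}}$ is Hamiltonian, and the spectral claim about its eigenvalues. These are proved by entirely different means, and it is worth noting in advance that the Hamiltonian hypothesis on $C$ is needed only for the first part.

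For the structural claim I would argue as follows. Since $C$ is an $r\times r$ Hamiltonian matrix and $X$ is $n\times r$, Lemma \ref{lemma} applies verbatim and shows that $XCX^{\mathcal{H}}$ is Hamiltonian. Because $A$ is Hamiltonian by hypothesis, the closure of $\mathcal{H}^n$ under real linear combinations recorded in Lemma \ref{propied}(i) gives immediately that $A+XCX^{\mathcal{H}}\in\mathcal{H}^n$.

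For the spectral claim I would first record the eigenvector identity. Assembling $Ax_i=\lambda_i x_i$ for $i=1,\dots,r$ column by column yields $AX=X\Omega$, whence
$$(A+XCX^{\mathcal{H}})X=AX+XC\,(X^{\mathcal{H}}X)=X\bigl(\Omega+CX^{\mathcal{H}}X\bigr)=XB.$$
Thus any eigenpair $(\mu,v)$ of $B$ produces the eigenpair $(\mu,Xv)$ of $A+XCX^{\mathcal{H}}$, with $Xv\neq 0$ since $\mathrm{rank}(X)=r$ forces $X$ to be injective; this already exhibits $\mu_1,\dots,\mu_r$ as eigenvalues. To obtain the full spectrum, the slickest route is to recognize the structured perturbation as an instance of the classical Rado theorem: putting $\widetilde C:=CX^{\mathcal{H}}$, an $r\times n$ matrix, we have $A+XCX^{\mathcal{H}}=A+X\widetilde C$ and $\Omega+\widetilde C X=B$, so Theorem \ref{Rado} applied to $A$ with eigenvectors $x_1,\dots,x_r$ and the matrix $\widetilde C$ returns exactly the spectrum $\{\mu_1,\dots,\mu_r,\lambda_{r+1},\dots,\lambda_n\}$. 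Should a self-contained argument be preferred, I would complete $X$ to an invertible $P=[X\mid X_2]$ and use $AX=X\Omega$ together with $P^{-1}X=\bigl(\begin{smallmatrix}I_r\\0\end{smallmatrix}\bigr)$ to bring both $A$ and $A+XCX^{\mathcal{H}}$ to block upper triangular form
$$P^{-1}AP=\begin{pmatrix}\Omega & M_1\\ 0 & M_2\end{pmatrix},\qquad P^{-1}(A+XCX^{\mathcal{H}})P=\begin{pmatrix}B & \ast\\ 0 & M_2\end{pmatrix};$$
since the first identity forces $\sigma(M_2)=\{\lambda_{r+1},\dots,\lambda_n\}$, reading the diagonal blocks of the second gives the claim.

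I do not expect a genuine obstacle: once the perturbation is written as $X\widetilde C$ with $\widetilde C=CX^{\mathcal{H}}$, the spectral conclusion is just Rado's theorem, and the structural conclusion is just the two preceding lemmas. The only points needing attention are bookkeeping: verifying that $CX^{\mathcal{H}}$ genuinely has size $r\times n$ so that Theorem \ref{Rado} applies, and keeping the $\mathcal{H}$-transpose dimension conventions (the $J_r$ versus $J_n$ in $X^{\mathcal{H}}=J_r X^T J_n$) consistent, so that all the products $XCX^{\mathcal{H}}$ and $X^{\mathcal{H}}X$ are well defined.
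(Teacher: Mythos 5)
Your proposal is correct. For the structural half you argue exactly as the paper does: Lemma \ref{lemma} shows $XCX^{\mathcal{H}}$ is Hamiltonian, and closure of $\mathcal{H}^n$ under sums (noted after Proposition \ref{intro1}) gives $A+XCX^{\mathcal{H}}\in\mathcal{H}^n$; the paper compresses this to a bare citation of Lemma \ref{lemma}, but the content is identical, and your version is if anything more careful. For the spectral half your primary route genuinely differs from the paper's: you reduce to the classical Rado theorem by setting $\widetilde C:=CX^{\mathcal{H}}$, which is indeed $r\times n$ since $X^{\mathcal{H}}=J_rX^{T}J_n$, so Theorem \ref{Rado} immediately returns the spectrum $\{\mu_1,\dots,\mu_r,\lambda_{r+1},\dots,\lambda_n\}$ with $\mu_i\in\sigma(\Omega+\widetilde CX)=\sigma(B)$. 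The paper instead re-runs the proof of Rado's theorem from scratch: it completes $X$ to a nonsingular $S=[X|Y]$, writes $S^{-1}$ in block rows $U,V$ with $UX=I_r$, $VX=0$, and computes
$$
S^{-1}(A+XCX^{\mathcal{H}})S=\begin{pmatrix}\Omega+CX^{\mathcal{H}}X & UAY+CX^{\mathcal{H}}Y\\ 0 & VAY\end{pmatrix},
$$
which is precisely your fallback block-triangularization argument, so your second route coincides with the paper's proof. Your main route buys brevity and makes explicit a point the paper leaves implicit, namely that the Hamiltonian hypothesis on $C$ is irrelevant to the eigenvalue statement and is used only for structure preservation; the paper's computation buys self-containedness, though since Theorem \ref{Rado} is stated in the paper, your black-box citation is perfectly legitimate. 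The dimension bookkeeping you flag (that $C$ being $r\times r$ Hamiltonian forces $r$ even, and the $J_r$ versus $J_n$ conventions in $X^{\mathcal{H}}$) is exactly the right thing to watch and is handled correctly.
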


\begin{proof}
Let $S=[X|Y]$ be a nonsingular matrix with $S^{-1}=[\frac{U}{V}]$. Then $UX=I_r$, $VY=I_{n-r}$, $VX=0$, $UY=0$. Moreover, since $AX=X\Omega$, we have
\begin{eqnarray*}
S^{-1}AS&=&\left[\frac{U}{V}\right]A[X|Y]=
\left(\begin{array}{cc}
    \Omega & UAY \\
       0   & VAY
\end{array}\right) \\
S^{-1}XCX^\mathcal{H}S&=& \left(\begin{array}{cc}
    CX^\mathcal{H}X &  CX^\mathcal{H}Y \\
                0   & 0
\end{array}\right) 
\end{eqnarray*}
Therefore, 
$$
S^{-1}(A+XCX^\mathcal{H})S=
\left(\begin{array}{cc}
    \Omega+CX^\mathcal{H}X &  UAY+ CX^\mathcal{H}Y \\
                0          & VAY
\end{array}\right). 
$$
Hence, the spectrum of $A+XCX^\mathcal{H}$ is the union of the spectra of  $\,\Omega+CX^\mathcal{H}X$ and $VAY$. That is,
$\{\mu_1,\mu_2,\ldots,\mu_r,\lambda_{r+1},\ldots,\lambda_n\}$. Finally, from Lemma \ref{lemma}, $A+XCX^\mathcal{H}$ is a Hamiltonian matrix. 

\end{proof}

\begin{example} $A=\left( 
\begin{array}{cccc}
1 & 2 & 0 & 1 \\ 
0 & 2 & 1 & 0 \\ 
1 & 2 & -1 & 0 \\ 
2 & 0 & -2 & -2%
\end{array}%
\right)$ is a Hamiltonian matrix with eigenvalues, $\{2\sqrt{2},,1,-1,-2\sqrt{2}\}$. From Theorem \ref{RadoH} we may change the eigenvalues $\pm2\sqrt{2}$ of $A$. Then, for

$$
X=\left( 
\begin{array}{cc}
4-3\sqrt{2} & 3\sqrt{2}+4 \\ 
\frac{7}{2}-\frac{5}{2}\sqrt{2} & \frac{5}{2}\sqrt{2}+\frac{7}{2} \\ 
3-2\sqrt{2} & 2\sqrt{2}+3 \\ 
1 & 1%
\end{array}%
\right), \,\,\,\,\, C=
\left( 
\begin{array}{cc}
 1& 2\\
 2 & -1\\
\end{array}%
\right),
$$ 
the eigenvalues of the Hamiltonian matrix $A+XCX^\mathcal{H}$ are  $\{\sqrt{442}, 1, -1,-\sqrt{442}\}$.

\end{example}

\section{Applications}
Many Hamiltonian eigenvalue problems arise from a number of applications, particularly in systems and control theory. The properties of Hamiltonian system like conservation of energy or volume in the phase space leads specific dynamical features. The differential state equations most used to describe the behaviour of a system are a \emph{linear continuous-time system} with constant coefficients, which can be described  by a set of matrix differential and algebraic equations
\begin{equation}\label{EQ}
    \begin{split}
        \dot{x}(t) &= Ax(t)+Bu(t), \,\,\,\, x(0)=x_0 \\
  y(t) &= Cx(t)+Du(t). \\
    \end{split}
\end{equation}

where $x(t)\in \mathbb{R}^n$ is called \emph{state} vector, $u(t)\in \mathbb{R}^m$ is the vector of \emph{inputs} and $y(t)\in \mathbb{R}^r$ is the vector of \emph{outputs} at time $t\in [0,\infty)$, $A$, $B$, $C$ and $D$ are real matrices of apropriate size.
The system above  is called \emph{stable} if all eigenvalues of $A$ lie in the left half plane. 

A bisection method for measuring the \emph{stability radius} of the system in (\ref{EQ})
$$
\gamma(A)=\{||E||_2\,\, : \,\, \sigma(A+E)\cap i\mathbb{R}\neq \emptyset\},
$$
where $E$ is a real matrix with the same order of $A$, can be based on the following Theorem: 

\begin{theorem}[\cite{Byers1}]
Let $A$ be an $n\times n$ real matrix. If $\alpha\geq 0$, then the Hamiltonian matrix
$$
H(\alpha)=\left(\begin{array}{cc}
     A        & -\alpha I_n \\
   \alpha I_n & -A^T\\
\end{array}\right)
$$
has an eigenvalue on the imaginary axis if only if $\alpha\geq \gamma(A)$. 
\end{theorem}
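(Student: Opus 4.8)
The plan is to link the purely imaginary eigenvalues of $H(\alpha)$ to the singular values of the shifted matrix $A-i\omega I_n$, and then to invoke the classical singular-value characterization of the stability radius $\gamma(A)$.

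First I would translate the eigenvalue condition into a pair of coupled equations. Since $A$ is real, $(A-i\omega I_n)^{*}=A^{T}+i\omega I_n$. Writing a candidate eigenvector of $H(\alpha)$ for the eigenvalue $i\omega$, $\omega\in\mathbb{R}$, as a pair $\binom{u}{v}$, the two block rows of $H(\alpha)\binom{u}{v}=i\omega\binom{u}{v}$ read $(A-i\omega I_n)u=\alpha v$ and $(A^{T}+i\omega I_n)v=\alpha u$. Setting $M=A-i\omega I_n$, these become $Mu=\alpha v$ and $M^{*}v=\alpha u$, whence $M^{*}Mu=\alpha^{2}u$. Thus, for $\alpha>0$ (the case $\alpha=0$ being checked directly, since then $H(0)$ just carries the eigenvalues of $A$ and of $-A^{T}$), a nonzero solution exists if and only if $\alpha$ is a singular value of $M=A-i\omega I_n$; conversely a Schmidt pair of singular vectors for the singular value $\alpha$ of $A-i\omega I_n$ yields the required eigenvector. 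Hence $H(\alpha)$ has an eigenvalue on the imaginary axis if and only if there is some $\omega\in\mathbb{R}$ for which $\alpha$ is a singular value of $A-i\omega I_n$.

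Next I would recall the Eckart--Young description of $\gamma(A)$. For a fixed $\omega$, the smallest spectral-norm perturbation $E$ making $A+E-i\omega I_n$ singular, that is, making $i\omega$ an eigenvalue of $A+E$, has norm equal to the smallest singular value $\sigma_{\min}(A-i\omega I_n)$. Minimizing over $\omega\in\mathbb{R}$ gives $\gamma(A)=\min_{\omega\in\mathbb{R}}\sigma_{\min}(A-i\omega I_n)$, and I would note that this minimum is attained: the map $\omega\mapsto\sigma_{\min}(A-i\omega I_n)$ is continuous, and each singular value of $A-i\omega I_n$ differs from $|\omega|$ by at most $\|A\|_2$, so $\sigma_{\min}(A-i\omega I_n)\to+\infty$ as $|\omega|\to\infty$. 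With these two facts the equivalence follows. For the forward direction, if $H(\alpha)$ has an eigenvalue $i\omega$ then $\alpha$ is a singular value of $A-i\omega I_n$, so $\alpha\geq\sigma_{\min}(A-i\omega I_n)\geq\gamma(A)$. For the converse I would argue by the intermediate value theorem: the continuous function above attains its minimum value $\gamma(A)$ and tends to $+\infty$, hence takes every value in $[\gamma(A),\infty)$; so for any $\alpha\geq\gamma(A)$ there is an $\omega_0$ with $\sigma_{\min}(A-i\omega_0 I_n)=\alpha$, making $\alpha$ a singular value of $A-i\omega_0 I_n$ and therefore $i\omega_0$ an eigenvalue of $H(\alpha)$.

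The routine step is the block computation producing the coupled equations $Mu=\alpha v$, $M^{*}v=\alpha u$. The main obstacle is the converse direction: one must guarantee that every threshold $\alpha\geq\gamma(A)$ is genuinely realized as a singular value of some $A-i\omega_0 I_n$. This is precisely where the coercivity of $\omega\mapsto\sigma_{\min}(A-i\omega I_n)$ at infinity, combined with its continuity and hence the intermediate value theorem, is indispensable; the inequality $\alpha\geq\gamma(A)$ alone does not suffice without this growth property.
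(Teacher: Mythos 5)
The paper offers no proof of this statement at all---it is imported verbatim from Byers' 1988 paper \cite{Byers1}---and your reconstruction is correct and is essentially Byers' original argument: the block equations $Mu=\alpha v$, $M^{*}v=\alpha u$ identifying imaginary-axis eigenvalues of $H(\alpha)$ with singular values of $A-i\omega I_n$, the characterization $\gamma(A)=\min_{\omega\in\mathbb{R}}\sigma_{\min}(A-i\omega I_n)$, and the continuity-plus-coercivity intermediate value argument for the converse, which you rightly flag as the step that cannot be skipped. One caveat worth recording: your Eckart--Young step tacitly allows \emph{complex} perturbations $E$ (as Byers does), whereas the paper's displayed definition of $\gamma(A)$ restricts $E$ to real matrices; with that restriction the theorem as stated can fail, since the real stability radius may strictly exceed $\min_{\omega}\sigma_{\min}(A-i\omega I_n)$, so your proof is correct for the complex (i.e., Byers') definition and silently corrects the paper's sloppy statement.
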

To decide whether $H(\alpha)$ has at least one eigenvalue on the imaginary axis is crucial for the success of the bisection method.




In the following results, we shall consider real matrices $ A $ such that $A^TA = AA^T$. That is, unitarily diagonalizable matrices such as, circulant matrices, symmetric matrices, etc.

\begin{theorem}\label{aply1}
Let $A$ be a matrix with eigenvalues $\{\lambda_1,\ldots,\lambda_n\}$, then
$$
H(\alpha)=\left(\begin{array}{cc}
     A        & -\alpha I_n \\
   \alpha I_n & -A^T\\
\end{array}\right)
$$
has eigenvalues $\{\pm\sqrt{\lambda_1^2-\alpha^2},\ldots \pm\sqrt{\lambda_n^2-\alpha^2}\}$. Therefore,  $H(\alpha)$ has all its eigenvalues on the imaginary axis if only if $|\lambda_k|<\alpha$, for  all $k=1,\ldots,n$. 
\end{theorem}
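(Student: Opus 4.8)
The plan is to obtain the characteristic polynomial of $H(\alpha)$ by a direct block computation, without diagonalizing $A$. The key structural feature is that the off-diagonal blocks of $H(\alpha)$ are the scalar matrices $-\alpha I_n$ and $\alpha I_n$, which commute with every $n\times n$ matrix. First I would write
$$
H(\alpha)-\mu I_{2n}=\left(\begin{array}{cc} A-\mu I_n & -\alpha I_n\\ \alpha I_n & -A^T-\mu I_n\end{array}\right),
$$
and, since the lower-left block $\alpha I_n$ commutes with the lower-right block $-A^T-\mu I_n$, apply the standard block-determinant identity (valid exactly because these two blocks commute) to get $\det(H(\alpha)-\mu I_{2n})=\det[(A-\mu I_n)(-A^T-\mu I_n)+\alpha^2 I_n]$. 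Expanding the product then reduces the problem to
$$
p_{H(\alpha)}(\mu)=\det\big[(\mu^2+\alpha^2)I_n-\mu(A-A^T)-AA^T\big].
$$
This identity holds for an arbitrary $A$ and is the core of the argument.

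Next I would read off the spectrum. For the symmetric matrices that are the principal instance of the class considered in this section, $A=A^T$, so the linear-in-$\mu$ term $\mu(A-A^T)$ vanishes and $AA^T=A^2$. The reduction then collapses to
$$
p_{H(\alpha)}(\mu)=\det\big[(\mu^2+\alpha^2)I_n-A^2\big]=\prod_{k=1}^{n}\big(\mu^2+\alpha^2-\lambda_k^2\big),
$$
the last equality because $A^2$ has eigenvalues $\lambda_1^2,\dots,\lambda_n^2$. Hence the roots of $p_{H(\alpha)}$ are exactly $\mu=\pm\sqrt{\lambda_k^2-\alpha^2}$, $k=1,\dots,n$, which is the asserted spectrum. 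For the final equivalence I would note that, since each $\lambda_k$ is real, $\pm\sqrt{\lambda_k^2-\alpha^2}$ is purely imaginary precisely when $\lambda_k^2\le\alpha^2$; imposing this for every $k$ yields the criterion, the strict inequality of the statement corresponding to the case in which no eigenvalue of $H(\alpha)$ collapses to $0$.

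The step I expect to be the main obstacle, and the one that pins down exactly which $A$ the theorem covers, is the disappearance of the cross term $\mu(A-A^T)$. For a non-symmetric $A$ this term survives, the determinant no longer factors as $\prod_{k}(\mu^2+\alpha^2-\lambda_k^2)$, and the stated spectrum need not hold: already the normal matrix $A=\left(\begin{smallmatrix}0&1\\-1&0\end{smallmatrix}\right)$ with $\alpha=1$ makes $H(\alpha)$ singular, so $0$ is an eigenvalue whereas the formula predicts $\pm i\sqrt2$. Consequently the argument genuinely rests on $A=A^T$ (equivalently, $A$ normal with real spectrum), and I would make that hypothesis explicit rather than rely on normality alone, since $A^TA=AA^T$ by itself does not suppress the skew part $A-A^T$.
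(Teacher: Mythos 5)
Your proof is correct, and it reaches the spectrum by a genuinely different route from the paper's. The paper diagonalizes: it takes a unitary $U$ with $U^*AU=D=\mathrm{diag}\{\lambda_1,\ldots,\lambda_n\}$, conjugates $H(\alpha)$ by $P=\mathrm{diag}(U,\overline{U})$ to get $B=\left(\begin{smallmatrix} D & -\alpha I_n\\ \alpha I_n & -D\end{smallmatrix}\right)$, and then exhibits explicit eigenvectors $v_k=\left[\beta_k e_k \mid e_k\right]^T$ with $\beta_k=\frac{1}{\alpha}\left(\lambda_k\pm\sqrt{\lambda_k^2-\alpha^2}\right)$, verifying $Bv_k=\pm\sqrt{\lambda_k^2-\alpha^2}\,v_k$ by direct computation. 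Your commuting-block determinant identity instead delivers the whole characteristic polynomial $\prod_{k}(\mu^2+\alpha^2-\lambda_k^2)$ at once: that buys you algebraic multiplicities for free and no degeneracy issue at $|\lambda_k|=\alpha$, where the paper's two eigenvectors $v_k^{\pm}$ coalesce (there $H(\alpha)$ acquires a Jordan block, so the paper's closing diagonalizability remark also fails at that boundary). What the paper's route buys in exchange is the eigenvectors themselves, which it reuses downstream: the proof of Theorem \ref{Hcond1} builds the matrix $X$ for the Rado-type perturbation of Theorem \ref{RadoH} precisely from the vectors $\beta^{\pm}e_1$ produced here, so a determinant-only argument would have to restore that computation later.

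Your caveat about the hypothesis is not pedantry; it pinpoints a genuine gap in the paper's own argument. The section preamble claims the result for all real $A$ with $A^TA=AA^T$, but conjugation by $P=\mathrm{diag}(U,\overline{U})$ sends the off-diagonal blocks of $H(\alpha)$ to $-\alpha U^*\overline{U}$ and $\alpha U^TU$, and these equal $\mp\alpha I_n$ only when $U^TU=I_n$, i.e., when $U$ can be taken real orthogonal --- which forces $A$ symmetric. Your counterexample $A=\left(\begin{smallmatrix}0&1\\-1&0\end{smallmatrix}\right)$, $\alpha=1$, settles the matter: rows $1$ and $4$ of $H(1)$ coincide, so $0\in\sigma(H(1))$, while the formula predicts the nonzero eigenvalues $\pm i\sqrt{2}$. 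So the statement is false for merely normal real $A$ (including non-symmetric circulants, which the preamble advertises), and your explicit restriction to $A=A^T$ --- equivalently, real normal with real spectrum --- is exactly the hypothesis under which both your argument and the paper's actually operate; consistently, the paper's illustrating example $A=circ(0,1,0,1)$ is symmetric.
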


\begin{proof}
Let $U$ be a unitary matrix such that $U^\ast AU=D=diag\{\lambda_1,\ldots,\lambda_n\}$, ehere $U^\ast$ denotes the conjugate transpose of $U$. Then $\overline{U}^\ast(- A^T)\overline{U}=-D$. If we define $P=\left(\begin{array}{cc}
     U  & 0 \\
    0       & \overline{U}\\
\end{array}\right)$, is easy to verify that
$$
P^\ast H(\alpha)P=\left(\begin{array}{cc}
     D         & -\alpha I_n \\
   \alpha I_n  & -D\\
\end{array}\right):=B.
$$
Now we will find a base of eigenvectors of $B$, we defined  $\beta_k=\frac{1}{\alpha}(\lambda_k\pm\sqrt{\lambda_k^2-\alpha^2})$ and $v_k=\left[\frac{\beta_ke_k}{e_k}\right]$, where $e_k$ is the k-th canonical vector.
\begin{eqnarray*}
Bv_k&=&\left(\begin{array}{cc}
     D         & -\alpha I_n \\
   \alpha I_n  & -D\\
\end{array}\right)\left(\begin{array}{c}
     \beta_ke_k \\
      e_k\\
\end{array}\right)\\
    &=& \left(\begin{array}{c}
     \beta_kDe_k-\alpha e_k \\
      \beta_k\alpha e_k-De_k\\
\end{array}\right)=
\left(\begin{array}{c}
     \beta_k \lambda_ke_k-\alpha e_k \\
      \beta_k\alpha e_k-\lambda_ke_k\\
\end{array}\right)\\
    &=& \left(\begin{array}{c}
     (\beta_k \lambda_k-\alpha )e_k \\
      (\beta_k\alpha -\lambda_k)e_k\\
\end{array}\right)=\left(\begin{array}{c}
     \pm\sqrt{\lambda_k^2-\alpha^2}\,\beta_k\, e_k \\
      \pm\sqrt{\lambda_k^2-\alpha^2}\,e_k\\
\end{array}\right)\\
    &=& \pm\sqrt{\lambda_k^2-\alpha^2}\,v_k
\end{eqnarray*}
Hence, $\{\pm\sqrt{\lambda_1^2-\alpha^2},\ldots, \pm\sqrt{\lambda_n^2-\alpha^2}\}$ are the eigenvalues of $H(\alpha)$. Even more, the algebraic multiplicity of $\lambda_k$ is equal to the algebraic multiplicity of  $-\sqrt{\lambda_k^2-\alpha^2}$ y $\sqrt{\lambda_k^2-\alpha^2}$, besides, since $A$ y $-A^T$ are diagonalizables, so will $H(\alpha)$. Finally, it is clear that $ H (\ alpha) $ has its eigenvalues in the imaginary axis if and only if, $|\lambda_k|<\alpha$, for all $k=1,\ldots,n$.
\end{proof}

\vspace{0.3cm}

As an immediate consequence of the previous result we have:
\begin{corollary}
Let $A$ be a matrix with eigenvalues $\{\lambda_1,\ldots,\lambda_n\}$. Then 
$\gamma(A)\leq \alpha$ if only if $|\lambda_k|<\alpha$ for some $k=1,2,\ldots,n$. 
\end{corollary}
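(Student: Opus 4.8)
The plan is to obtain the statement by feeding the explicit spectrum of $H(\alpha)$ from Theorem \ref{aply1} into the theorem of Byers \cite{Byers1}. Byers' theorem asserts that $\gamma(A)\leq\alpha$ holds if and only if $H(\alpha)$ has at least one eigenvalue on the imaginary axis; thus the task reduces to a pointwise question: given the eigenvalues $\lambda_1,\ldots,\lambda_n$ of $A$, when does $H(\alpha)$ carry an eigenvalue in $i\mathbb{R}$?

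First I would recall from Theorem \ref{aply1} that the spectrum of $H(\alpha)$ consists exactly of the numbers $\pm\sqrt{\lambda_k^2-\alpha^2}$, $k=1,\ldots,n$. The decisive elementary observation is then that $\pm\sqrt{\lambda_k^2-\alpha^2}$ lands on the imaginary axis precisely when the radicand $\lambda_k^2-\alpha^2$ is real and nonpositive; for a real eigenvalue $\lambda_k$ this amounts to $|\lambda_k|<\alpha$ (a strictly negative radicand giving a nonzero purely imaginary value). This is the very computation performed inside the proof of Theorem \ref{aply1}; the only change is that here I harvest the existential (``for some $k$'') reading instead of the universal (``for all $k$'') one used there. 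Concatenating the two equivalences yields
$$
\gamma(A)\leq\alpha \;\Longleftrightarrow\; \pm\sqrt{\lambda_k^2-\alpha^2}\in i\mathbb{R}\ \text{for some }k \;\Longleftrightarrow\; |\lambda_k|<\alpha\ \text{for some }k,
$$
which is the assertion of the corollary.

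The step I expect to require the most care is the analysis of the radicand rather than any deep argument. One must verify that $\sqrt{\lambda_k^2-\alpha^2}$ is purely imaginary only when $\lambda_k^2-\alpha^2$ is a real nonpositive number, and in particular confirm that a complex eigenvalue $\lambda_k=a+ib$ of the normal matrix $A$ with $ab\neq 0$ makes $\lambda_k^2-\alpha^2$ non-real and hence contributes no imaginary-axis eigenvalue. A second delicate point is the behaviour at the boundary $|\lambda_k|=\alpha$, where the radicand vanishes and the corresponding eigenvalue of $H(\alpha)$ equals $0\in i\mathbb{R}$; reconciling this with the strict inequality in the statement must be handled consistently with the conventions already fixed in Theorem \ref{aply1} and in Byers' theorem. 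Once these bookkeeping matters are settled, the corollary follows immediately from the two cited results.
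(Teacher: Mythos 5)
Your argument is exactly the one the paper intends: the paper prints no proof at all, introducing the corollary only as ``an immediate consequence of the previous result,'' i.e.\ precisely your combination of Byers' characterization $\gamma(A)\leq\alpha \Leftrightarrow \sigma(H(\alpha))\cap i\mathbb{R}\neq\emptyset$ from \cite{Byers1} with the explicit spectrum $\left\{\pm\sqrt{\lambda_k^2-\alpha^2}\right\}$ of Theorem \ref{aply1}, read existentially rather than universally. If anything you are more careful than the source, since the two delicate points you flag --- the boundary case $|\lambda_k|=\alpha$, where the eigenvalue $0$ lies on the imaginary axis yet the strict inequality fails, and the purely imaginary eigenvalues $\lambda_k=ib$ that a real normal $A$ may have, for which $\lambda_k^2-\alpha^2<0$ and hence $H(\alpha)$ acquires imaginary-axis eigenvalues even when $|\lambda_k|\geq\alpha$ --- are genuine loose ends in the corollary's statement that the paper silently passes over.
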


\begin{example}
Let $A=circ(0,1,0,1)$ be a circulant matrix  with eigenvalues $\{2,0,0,-2\}$. Then 
$$
H(\alpha)=\left(\begin{array}{cccccccc}
0 & 1 & 0 & 1 & -\alpha  & 0 & 0 & 0 \\
1 & 0 & 1 & 0 & 0 & -\alpha  & 0 & 0 \\
0 & 1 & 0 & 1 & 0 & 0 & -\alpha  & 0 \\
1 & 0 & 1 & 0 & 0 & 0 & 0 & -\alpha  \\
\alpha  & 0 & 0 & 0 & 0 & -1 & 0 & -1 \\
0 & \alpha  & 0 & 0 & -1 & 0 & -1 & 0 \\
0 & 0 & \alpha  & 0 & 0 & -1 & 0 & -1 \\
0 & 0 & 0 & \alpha  & -1 & 0 & -1 & 0%
\end{array}\right)
$$
has eigenvalues: $\{\pm i\alpha ,\pm i\alpha,\pm \sqrt{4-\alpha^2},\pm\sqrt{4-\alpha^2}\}$. Thus, $H(\alpha)$ has all its eigenvalues on the imaginary axis if $2<\alpha$ and $\alpha\neq 0$. In this case $\gamma(A)\geq    \alpha$ if only if $2<\alpha$.
\end{example}

\begin{theorem}
Let $A$ be an $n\times n$ real matrix  with spectrum $\{\lambda_1,\ldots,\lambda_n\}$, and let 
$D=diag\{d_1,d_2,\ldots,d_n\}$, $d_i\neq 0$ $i=1,2,\ldots,n$. Then, the matrix 
$$
H=\left(\begin{array}{cc}
     A   & -D \\
     D   & -A^T\\
\end{array}\right)
$$
has eigenvalues $\{\pm\sqrt{\lambda_1^2-d_1^2},\ldots \pm\sqrt{\lambda_n^2-d_n^2}\}$, with  corresponding  eigenvectors

$$X_k=\left[\frac{\beta_ke_k}{e_k}\right], where\,\,\,\, \beta_k=\frac{1}{d_k}\left(\lambda_k\pm\sqrt{\lambda_k^2-d_k^2}\right),$$
$k=1,\ldots,n$. 
\end{theorem}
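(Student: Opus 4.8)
The plan is to verify the eigenvalue/eigenvector claim directly, rather than through a similarity reduction. First I would check that $H$ is Hamiltonian: its off-diagonal blocks $-D$ and $D$ are symmetric (being diagonal) and its diagonal blocks are $A$ and $-A^T$, so Proposition \ref{intro1}(d) applies at once. Next I would substitute the proposed $X_k=\begin{pmatrix}\beta_k e_k\\ e_k\end{pmatrix}$ into $H$ and compute block-wise, using $De_k=d_k e_k$ together with $Ae_k=\lambda_k e_k$ and $A^Te_k=\lambda_k e_k$, to obtain
\[
HX_k=\begin{pmatrix} \beta_k\lambda_k e_k-d_k e_k\\ \beta_k d_k e_k-\lambda_k e_k\end{pmatrix}.
\]

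Imposing $HX_k=\mu X_k$ then gives the two scalar relations $\beta_k\lambda_k-d_k=\mu\beta_k$ and $\beta_k d_k-\lambda_k=\mu$. Eliminating $\mu$ (and using $d_k\neq 0$) yields the quadratic $d_k\beta_k^{2}-2\lambda_k\beta_k+d_k=0$, whose roots are exactly the stated values $\beta_k=\tfrac{1}{d_k}\bigl(\lambda_k\pm\sqrt{\lambda_k^{2}-d_k^{2}}\bigr)$; back-substituting gives $\mu=\beta_k d_k-\lambda_k=\pm\sqrt{\lambda_k^{2}-d_k^{2}}$. The two sign choices produce, for each $k$, two eigenvectors, hence $2n$ in all; since their lower blocks are the independent vectors $e_1,\dots,e_n$ and the two $\beta_k$-values differ whenever $\lambda_k^{2}\neq d_k^{2}$, these $2n$ eigenvectors are linearly independent and therefore exhaust the spectrum of $H$.

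The step I expect to be the crux is the tacit requirement, used above, that each canonical vector $e_k$ be a common eigenvector of $A$ and $A^T$ with the \emph{same} eigenvalue $\lambda_k$. This is automatic when $A$ is diagonal (real), and in that case $H$ in fact decouples, after a permutation, into $n$ independent $2\times 2$ Hamiltonian blocks $\begin{pmatrix}\lambda_k & -d_k\\ d_k & -\lambda_k\end{pmatrix}$, each with characteristic polynomial $x^{2}-(\lambda_k^{2}-d_k^{2})$, giving a second, transparent derivation of the eigenvalues. For a merely normal $A$ the $e_k$ need not be eigenvectors: following the reduction of Theorem \ref{aply1} by conjugating with $P=\mathrm{diag}(U,\overline U)$ sends the diagonal blocks to $\Lambda=\mathrm{diag}(\lambda_1,\dots,\lambda_n)$ and $-\Lambda$, but sends the off-diagonal $D$ to $U^{*}D\overline U$, which is no longer diagonal unless $A$ and $D$ commute. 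Thus the clean formula $\pm\sqrt{\lambda_k^{2}-d_k^{2}}$ really depends on the compatibility $AD=DA$ (in the present diagonal-$D$ setting, essentially on $A$ being diagonal), and pinning down this commutativity hypothesis is the point that must be handled with care.
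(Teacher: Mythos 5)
Your block-wise verification and the quadratic $d_k\beta_k^{2}-2\lambda_k\beta_k+d_k=0$ are correct, and they are in substance the very computation the paper relies on: the paper's entire proof of this theorem is the one line ``Following the same argument in the proof of Theorem \ref{aply1}'', and the argument of Theorem \ref{aply1} is exactly your eigenvector check with $\alpha$ replaced by $d_k$, preceded by a unitary reduction $P^{\ast}HP$, $P=\mathrm{diag}(U,\overline{U})$, that is supposed to replace $A$ by $\Lambda=\mathrm{diag}(\lambda_1,\ldots,\lambda_n)$. What you have correctly spotted, and the paper glosses over, is that this reduction does not carry over: it sends the off-diagonal block $D$ to $U^{\ast}D\overline{U}$, so the subsequent check that $\left[\beta_k e_k ;\, e_k\right]$ is an eigenvector tacitly requires $Ae_k=\lambda_k e_k=A^{T}e_k$, which (with the $d_i$ distinct) forces $A$ to be diagonal. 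The gap is genuine, not merely cautious: the theorem as stated is false even under the paper's standing normality assumption $A^{T}A=AA^{T}$. For the symmetric matrix $A=\left(\begin{array}{cc}0&1\\1&0\end{array}\right)$ and $D=\mathrm{diag}\{1,2\}$ one computes $\det(H-xI)=x^{4}+3x^{2}+1$, whereas the theorem predicts $x^{2}(x^{2}+3)$; in particular $H$ is invertible while the predicted spectrum contains $0$.

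One refinement to your diagnosis: the compatibility $AD=DA$ that you single out is not the right repair. Even when $D=\alpha I$, which commutes with everything, the reduction collapses for normal non-symmetric $A$, because $U^{\ast}(-\alpha I)\overline{U}=-\alpha\, U^{\ast}\overline{U}\neq-\alpha I$ unless $U$ can be chosen real, i.e.\ unless $A$ is symmetric. Indeed Theorem \ref{aply1} itself already fails for the skew-symmetric (hence normal) matrix $A=\left(\begin{array}{cc}0&1\\-1&0\end{array}\right)$ with $\alpha=1$: there two rows of $H(1)$ coincide, and $\det(H(1)-xI)=x^{2}(x^{2}+4)$, not $(x^{2}+2)^{2}$ as the formula $\pm\sqrt{\lambda_k^{2}-\alpha^{2}}$ would demand. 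The hypothesis that actually validates your argument (and the paper's) is that each $e_k$ be a common eigenvector of $A$ and $A^{T}$ with the same eigenvalue $\lambda_k$ paired to $d_k$ --- in the present setting, $A$ diagonal, exactly as your second, decoupling derivation into $2\times 2$ Hamiltonian blocks shows; more generally $A$ symmetric with $AD=DA$ suffices, since then a real orthogonal $Q$ diagonalizes $A$ while commuting with $D$, and conjugation by $\mathrm{diag}(Q,Q)$ produces your block decomposition with $e_k$ replaced by $Qe_k$. Your closing independence argument is fine, with the caveat (shared by the paper) that when $\lambda_k^{2}=d_k^{2}$ the two eigenvectors coincide and diagonalizability of $H$ may fail.
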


\begin{proof}
Following the same argument in the proof of Theorem \ref{aply1}, the result follows.
\end{proof}
\vspace{0.3cm}

Now, we consider the case $H(\alpha)$ having all real eigenvalues. We shall find bounds for $\gamma(A)$. To do this, we shall use Theorem \ref{RadoH} and following result:

\begin{theorem}[\cite{Byers1}]
Let $\alpha\geq 0$, and let $E$ be a Hamiltonian matrix. Let $K(\alpha)=H(\alpha)+E$. If $K(\alpha)$ has an eigenvalue with zero real part, then $\gamma (A)\leq \alpha +2||E||$.
\end{theorem}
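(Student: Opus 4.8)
The plan is to reduce everything to the biconditional of Byers recalled just above this statement, namely that $H(\beta)$ has an eigenvalue on the imaginary axis if and only if $\beta\ge\gamma(A)$ (for $\beta\ge 0$). Thus it suffices to exhibit a nonnegative number $\beta\le\alpha+2\|E\|$ for which $H(\beta)$ is forced to have a purely imaginary eigenvalue. The whole argument will pivot on controlling the smallest singular value of $H(\alpha)-i\omega_0 I$, where $i\omega_0$ is the imaginary eigenvalue of $K(\alpha)$.

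First I would extract an estimate from the eigenvector. Suppose $i\omega_0$, with $\omega_0\in\mathbb{R}$, is an eigenvalue of $K(\alpha)=H(\alpha)+E$ with unit eigenvector $v$. Then $(H(\alpha)-i\omega_0 I)v=-Ev$, so $\|(H(\alpha)-i\omega_0 I)v\|=\|Ev\|\le\|E\|$, and since $v$ is a unit vector this gives the key inequality $\sigma_{\min}(H(\alpha)-i\omega_0 I)\le\|E\|$, where $\sigma_{\min}$ is the smallest singular value. Note this uses only the eigenvector, so no invertibility hypothesis on $H(\alpha)-i\omega_0 I$ is needed and no case split will be required.

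The core computation is to read off the singular values of $H(\alpha)-i\omega_0 I$ in terms of those of $A-i\omega_0 I$. Writing $M=A-i\omega_0 I$ and using that $A$ is real and $\omega_0$ real, the lower-right block equals $-A^{T}-i\omega_0 I=-M^{*}$, so
$$
H(\alpha)-i\omega_0 I=\begin{pmatrix} M & -\alpha I \\ \alpha I & -M^{*}\end{pmatrix}.
$$
Substituting an SVD $M=U\Sigma V^{*}$ and conjugating on the left by the unitary $\mathrm{diag}(U,V)$ and on the right by $\mathrm{diag}(V,U)^{*}$ reduces this matrix to $\left(\begin{smallmatrix}\Sigma & -\alpha I\\ \alpha I & -\Sigma\end{smallmatrix}\right)$, which, after a symmetric permutation, splits into the $2\times2$ blocks $\left(\begin{smallmatrix}\sigma_k & -\alpha\\ \alpha & -\sigma_k\end{smallmatrix}\right)$, where $\sigma_1,\dots,\sigma_n$ are the singular values of $A-i\omega_0 I$. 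Each such block has singular values $|\sigma_k-\alpha|$ and $\sigma_k+\alpha$, and since $\sigma_k+\alpha\ge|\sigma_k-\alpha|$ we conclude $\sigma_{\min}(H(\alpha)-i\omega_0 I)=\min_k|\sigma_k-\alpha|$.

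Combining the last two displays gives $\min_k|\sigma_k-\alpha|\le\|E\|$; choose $\sigma_{k_0}$ attaining the minimum, so $0\le\sigma_{k_0}\le\alpha+\|E\|$. Re-running the same block computation with $\alpha$ replaced by $\sigma_{k_0}$ (the $\sigma_k$ are unchanged, as they do not depend on that parameter) shows that $H(\sigma_{k_0})-i\omega_0 I$ has the singular value $|\sigma_{k_0}-\sigma_{k_0}|=0$, i.e.\ $i\omega_0\in\sigma(H(\sigma_{k_0}))$. The earlier Byers theorem then forces $\sigma_{k_0}\ge\gamma(A)$, whence $\gamma(A)\le\sigma_{k_0}\le\alpha+\|E\|\le\alpha+2\|E\|$. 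I expect the main obstacle to be precisely the block singular-value computation of paragraph three: correctly identifying the lower-right block as $-M^{*}$ and then carrying the unitary reduction far enough to read off the exact set $\{|\sigma_k-\alpha|,\ \sigma_k+\alpha\}$. The eigenvector estimate and the final appeal to the biconditional are short by comparison. It is worth remarking that this route in fact delivers the sharper constant $\alpha+\|E\|$, from which the stated bound $\alpha+2\|E\|$ follows a fortiori.
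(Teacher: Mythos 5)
Your proof is correct, but be aware that the paper contains no proof of this statement to compare against: it is quoted verbatim from Byers \cite{Byers1}, so the only meaningful comparison is with Byers's original argument. Measured against that, your route is genuinely different and in fact stronger. Byers obtains the constant $2\|E\|$ by exploiting the block structure of the Hamiltonian perturbation $E$, paying once for the perturbation of the diagonal block $A$ (via the $1$-Lipschitz dependence of $\gamma$ on its argument) and once for the perturbation of the off-diagonal blocks $\pm\alpha I$. You instead bound $\sigma_{\min}\bigl(H(\alpha)-i\omega_0 I\bigr)\le\|E\|$ directly from the unit eigenvector of $K(\alpha)$, and combine this with the exact identity that the singular values of $H(\alpha)-i\omega_0 I$ are $\{|\sigma_k-\alpha|,\ \sigma_k+\alpha\}$ where the $\sigma_k$ are the singular values of $A-i\omega_0 I$; rerunning the computation with $\alpha$ replaced by the nearest $\sigma_{k_0}$ shows $i\omega_0\in\sigma(H(\sigma_{k_0}))$, and the biconditional quoted just before the statement (of which you need only the forward direction, itself immediate from $\gamma(A)=\min_{\omega}\sigma_{\min}(A-i\omega I)$) gives $\gamma(A)\le\sigma_{k_0}\le\alpha+\|E\|$. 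This buys two things Byers's argument does not: the sharper constant $\alpha+\|E\|$, and the removal of the hypothesis that $E$ be Hamiltonian, which your proof never uses. All the delicate steps check out: the lower-right block is indeed $-M^{*}$ since $A$ is real and $\omega_0$ real, and each $2\times 2$ block $\bigl(\begin{smallmatrix}\sigma_k & -\alpha\\ \alpha & -\sigma_k\end{smallmatrix}\bigr)$ has singular values $|\sigma_k-\alpha|$ and $\sigma_k+\alpha$. One bookkeeping slip: with $M=U\Sigma V^{*}$, the reduction requires multiplying on the left by $\mathrm{diag}(U,V)^{*}$ and on the right by $\mathrm{diag}(V,U)$, not the factors as you stated them; since singular values are invariant under arbitrary two-sided unitary multiplication, this does not affect the argument, but it should be stated correctly.
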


\begin{theorem}\label{Hcond1}
  Let $A$ be an $n\times n$ real matrix and let $\alpha\geq 0$. If $H(\alpha)$ has only real eigenvalues, then there is a Hamiltonian matrix $K(\alpha)=H(\alpha)+E$, such that $K(\alpha)$ has an eigenvalue in the imaginary axis.
\end{theorem}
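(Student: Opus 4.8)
The plan is to invoke the Hamiltonian version of Rado's Theorem (Theorem \ref{RadoH}) to replace a single symmetric pair of real eigenvalues of $H(\alpha)$ by a conjugate pair lying on the imaginary axis, arranging matters so that the resulting perturbation $E$ is automatically Hamiltonian via Lemma \ref{lemma}. Since $H(\alpha)$ is Hamiltonian and, by hypothesis, has only real eigenvalues, Proposition \ref{intro2} forces its spectrum to be symmetric about the origin, so it consists of pairs $\{\nu,-\nu\}$ with $\nu\in\mathbb{R}$. If $0\in\sigma(H(\alpha))$ we are already finished with $E=0$, since $0$ lies on the imaginary axis. Otherwise I fix a pair with $\nu>0$, take (real) eigenvectors $x_1,x_2$ of $H(\alpha)$ for $\nu$ and $-\nu$, set $X=[x_1\,|\,x_2]$ (which has rank $2$, the eigenvalues being distinct) and $\Omega=diag\{\nu,-\nu\}$, and look for a $2\times 2$ Hamiltonian matrix $C$ making $K(\alpha)=H(\alpha)+XCX^{\mathcal H}$ behave as required.

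The first computation is to identify $X^{\mathcal H}X$. With $X^{\mathcal H}=\widetilde J X^T J$, where $\widetilde J=\left(\begin{smallmatrix}0&1\\-1&0\end{smallmatrix}\right)$ and $J$ is the $2n\times 2n$ matrix of the definition, the $(i,j)$ entry of $X^TJX$ is $x_i^TJx_j$; since $J^T=-J$ the diagonal entries vanish, so
$$
X^TJX=\begin{pmatrix}0 & s\\ -s & 0\end{pmatrix}=s\widetilde J,\qquad s:=x_1^TJx_2,
$$
and hence $X^{\mathcal H}X=\widetilde J\,(s\widetilde J)=-sI_2$. Consequently the matrix $B$ of Theorem \ref{RadoH} becomes
$$
B=\Omega+CX^{\mathcal H}X=\Omega-sC,
$$
which is again a $2\times 2$ Hamiltonian matrix. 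Writing $C=\left(\begin{smallmatrix}c & g\\ f & -c\end{smallmatrix}\right)$, its eigenvalues are $\pm\sqrt{(\nu-sc)^2+s^2fg}$. Provided $s\neq 0$, I can annihilate the first summand with $c=\nu/s$ and force a negative radicand with $f=1,\ g=-1$, obtaining eigenvalues $\pm i$. Theorem \ref{RadoH} then yields $\sigma(K(\alpha))=\{i,-i,\lambda_3,\dots,\lambda_{2n}\}$, so $K(\alpha)$ has an eigenvalue on the imaginary axis, while $E=XCX^{\mathcal H}$ is Hamiltonian by Lemma \ref{lemma}; therefore $K(\alpha)=H(\alpha)+E$ is Hamiltonian as well.

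The one genuine obstacle is the nonvanishing of $s=x_1^TJx_2$, which is precisely what makes the perturbation effective: if $s=0$ then $X^{\mathcal H}X=0$, $B=\Omega$, and the spectrum is unchanged. To settle it I would appeal to the symplectic structure of Hamiltonian matrices. Since $H(\alpha)^TJ=-JH(\alpha)$, the bilinear form $\langle u,v\rangle=u^TJv$ satisfies $\langle H(\alpha)u,v\rangle=-\langle u,H(\alpha)v\rangle$, so that $(\lambda_i+\lambda_j)\langle x_i,x_j\rangle=0$ for eigenvectors; this forces the $\nu$- and $(-\nu)$-eigenspaces to be paired \emph{nondegenerately} by $\langle\cdot,\cdot\rangle$ whenever $\nu\neq 0$, which guarantees that $x_2$ can be chosen in the $(-\nu)$-eigenspace with $\langle x_1,x_2\rangle\neq 0$. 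In the normal case of Theorem \ref{aply1} this is even explicit: using the eigenvectors $v_k=\left[\frac{\beta_k e_k}{e_k}\right]$ of that proof, a direct evaluation gives $s=\beta_k^{+}-\beta_k^{-}=2\nu/\alpha\neq 0$, so no appeal to nondegeneracy is needed and the construction above goes through verbatim.
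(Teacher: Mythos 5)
Your proposal follows the same strategy as the paper's proof: perturb a symmetric pair $\{\nu,-\nu\}$ of real eigenvalues of $H(\alpha)$ by a rank-two term $XCX^{\mathcal H}$ built from the corresponding eigenvectors, and invoke Theorem \ref{RadoH} together with Lemma \ref{lemma} to keep the perturbation Hamiltonian. In one respect you are more careful than the paper: the published proof asserts $\{\mu_1,\mu_2\}=\sigma(CX^{\mathcal H}X)$, silently dropping the $\Omega$ term of Theorem \ref{RadoH}. With $\Omega$ included, the new eigenvalues are $\pm\sqrt{(\nu-sc)^2+s^2fg}$, so the paper's condition $a^2+bc<0$ alone does not guarantee a purely imaginary pair; your choice $c=\nu/s$, $f=1$, $g=-1$ genuinely does (the resulting eigenvalues are $\pm i|s|$, not $\pm i$ as you wrote --- a harmless slip). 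Your identification $X^{\mathcal H}X=-sI_2$ with $s=x_1^TJx_2$ also matches the paper's computation, where $s=\beta^{+}-\beta^{-}$.

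The one claim that overreaches is your abstract justification that $s$ can be made nonzero. The identity $(\lambda_i+\lambda_j)\langle x_i,x_j\rangle=0$ forces the form $\langle u,v\rangle=u^TJv$ to pair the \emph{generalized} eigenspaces for $\nu$ and $-\nu$ nondegenerately, not the eigenspaces proper. For a non-diagonalizable Hamiltonian matrix the genuine eigenspaces can pair to zero: take $H=\left(\begin{smallmatrix}C&0\\0&-C^T\end{smallmatrix}\right)$ with $C=\left(\begin{smallmatrix}\nu&1\\0&\nu\end{smallmatrix}\right)$, which is Hamiltonian by Proposition \ref{intro1}(d); then $\ker(H-\nu I)=\mathrm{span}\{e_1\}$, $\ker(H+\nu I)=\mathrm{span}\{e_4\}$, yet $e_1^TJe_4=0$, so no choice of eigenvectors gives $s\neq 0$. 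This does not sink your proof, because your fallback is exactly the paper's argument: under the standing assumption of this section that $A^TA=AA^T$, Theorem \ref{aply1} supplies explicit eigenvectors with $s=\beta^{+}-\beta^{-}=\frac{2\nu}{\alpha}\neq 0$ whenever $\nu\neq 0$, and the case $\nu=0$ you correctly dispose of with $E=0$ (a degeneracy the paper overlooks, since there $\beta^{+}=\beta^{-}$ and its $X$ has rank one). Restricted to the normal case in which the paper actually operates, your proof is complete and in fact repairs the published one; just replace the general nondegeneracy assertion by the explicit computation, or state it only for diagonalizable $H(\alpha)$.
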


\begin{proof}
As all the eigenvalues of $H(\alpha)$ are real, then $\beta^+=\frac{1}{\alpha}(\lambda_1+\sqrt{\lambda_1^2-\alpha^2})$, $\beta^-=\frac{1}{\alpha}(\lambda_1-\sqrt{\lambda_1^2-\alpha^2})$ are real numbers. Let 
$$
X=\left(
\begin{array}{cc}
 \beta^+e_1 & \beta^-e_1 \\
        e_1 &      e_1   \\
\end{array}
\right)
$$
be a matrix, the columns of which whose columns are the eigenvectors corresponding to the eigenva\-lues $\beta^-$ and $\beta^+$ (It follows from Theorem \ref{aply1}). We consider the perturbed matrix $H(\alpha)+XCX^{\mathcal{H}}$, where $C$ is an $2\times2$ Hamiltonian matrix.

\vspace{0.3cm}
From the Theorem \ref{RadoH}, $H(\alpha)+XCX^{\mathcal{H}}$ has eigenvalues 
$$
\{\mu_1,\mu_2, \pm\sqrt{\lambda_2^2-\alpha^2},\ldots,\pm\sqrt{\lambda_2^2-\alpha^2}\}),
$$
where $\{\mu_1,\mu_2\}=\sigma(CX^{\mathcal{H}}X)$. It easy to verify that, if $C=\left(\begin{array}{cc}
 a & b  \\
 c & -a \\
\end{array}
\right)$, then
\begin{eqnarray*}
CX^{\mathcal{H}}X&=&\left(
\begin{array}{cc}
 a & b  \\
 c & -a \\
\end{array}
\right)
\left(
\begin{array}{cc}
 (\beta^--\beta^+) &    0    \\
    0              &  (\beta^--\beta^+) \\
\end{array}
\right)\\
&=&-\frac{2}{\alpha}\sqrt{\lambda_1^2-\alpha^2}
\left(
\begin{array}{cc}
 a & b  \\
 c & -a \\
\end{array}
\right).
\end{eqnarray*}
Therefore, $\{\mu_1,\mu_2\}=\{\frac{2}{\alpha}\sqrt{\lambda_1^2-\alpha^2}\sqrt{a^2+bc},-\frac{2}{\alpha}\sqrt{\lambda_1^2-\alpha^2}\sqrt{a^2+bc}\}$. So it is enough to take real numbers $a,b,c\in\mathbb{R}$ such that $a^2+bc<0$.
\end{proof}

\begin{example}
Let  
$$A=\left(
\begin{array}{cccccc}
-1 & 0 & 0 & -\frac{1}{3} & 0 & 0 \\
0 & -1 & 0 & 0 & -\frac{1}{3} & 0 \\
0 & 0 & 2 & 0 & 0 & -\frac{1}{3} \\
\frac{1}{3} & 0 & 0 & 1 & 0 & 0 \\
0 & \frac{1}{3} & 0 & 0 & 1 & 0 \\
0 & 0 & \frac{1}{3} & 0 & 0 & -2%
\end{array}%
\right), 
$$
be a Hamiltonian matrix with eigenvalues $\{2\sqrt{2}-3,-2\sqrt{2}-3,2\sqrt{2}-3,-2\sqrt{2}-3, \frac{1}{3}\sqrt{35},-\frac{1}{3}\sqrt{35}\}$. Let

$$X=\left(
\begin{array}{cc}
-2\sqrt{2}-3 & 2\sqrt{2}-3 \\
0 & 0 \\
0 & 0 \\
1 & 1 \\
0 & 0 \\
0 & 0%
\end{array}%
\right),
$$
such that $AX=\Omega X$, where $\Omega=diag\{-2\sqrt{2}-3,2\sqrt{2}-3\}$.

Then, from Theorem \ref{RadoH}, $A+XCX^{\mathcal{H}}$ is a Hamiltonian matrix with eigenvalues  $\{\frac{2}{3}\sqrt{2},-\frac{2}{3}\sqrt{2}%
,\frac{1}{3}\sqrt{35},-\frac{1}{3}\sqrt{35},\frac{2}{3}i\sqrt{2}\sqrt{23},-%
\frac{2}{3}\allowbreak i\sqrt{2}\sqrt{23}\}$, where 
$
C=\left(\begin{array}{cc}
     2        & 2 \\
    -2 & -2\\
\end{array}\right)
$.
\end{example}

\begin{lemma}\label{Hlemma}
Let $C$ and $X$ be matrices, which satisfy the hypotheses of Theorem \ref{RadoH}. If $X^TX=I_r$, then $||XCX^\mathcal{H}||_F=||C||_F$.
\end{lemma}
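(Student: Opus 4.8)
The plan is to reduce everything to the elementary fact that the Frobenius norm is unchanged under multiplication by an isometry: if $P$ has orthonormal columns ($P^{T}P=I$) then $\|PZ\|_{F}=\|Z\|_{F}$, and if $Q$ has orthonormal rows ($QQ^{T}=I$) then $\|ZQ\|_{F}=\|Z\|_{F}$. Both follow at once from $\|M\|_{F}^{2}=\mathrm{tr}(M^{T}M)$ together with the cyclic invariance of the trace. Writing $M=XCX^{\mathcal{H}}$, the hypothesis $X^{T}X=I_{r}$ says precisely that $X$ has orthonormal columns, so the factor $X$ on the left can be stripped off: $\|XCX^{\mathcal{H}}\|_{F}=\|CX^{\mathcal{H}}\|_{F}$.

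The heart of the argument is then to show that $X^{\mathcal{H}}$ has orthonormal rows, i.e. $X^{\mathcal{H}}(X^{\mathcal{H}})^{T}=I_{r}$, so that the factor $X^{\mathcal{H}}$ on the right can be stripped off as well, giving $\|CX^{\mathcal{H}}\|_{F}=\|C\|_{F}$. Using $X^{\mathcal{H}}=J_{r}X^{T}J_{n}$ (as in the proof of Lemma~\ref{lemma}) and the recorded identities $J^{T}=-J$ and $J^{2}=-I$ applied to both $J_{r}$ and $J_{n}$, I would first compute $(X^{\mathcal{H}})^{T}=J_{n}XJ_{r}$ and then
$$
X^{\mathcal{H}}(X^{\mathcal{H}})^{T}=J_{r}X^{T}J_{n}\,J_{n}XJ_{r}=-J_{r}\,X^{T}X\,J_{r}=-J_{r}J_{r}=I_{r},
$$
where the first collapse uses $J_{n}^{2}=-I_{n}$, and the last two use $X^{T}X=I_{r}$ and $J_{r}^{2}=-I_{r}$. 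Chaining the two reductions yields $\|XCX^{\mathcal{H}}\|_{F}=\|CX^{\mathcal{H}}\|_{F}=\|C\|_{F}$, as claimed.

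There is no serious obstacle here; the only thing that needs care is the bookkeeping of the two symplectic matrices of different sizes, $J_{r}$ and $J_{n}$, and inserting the orthonormality hypothesis $X^{T}X=I_{r}$ at exactly the point where the two inner copies of $X$ meet. Equivalently, one can bypass the isometry lemma and expand $\mathrm{tr}(M^{T}M)$ directly, repeatedly using cyclicity of the trace and $J^{2}=-I$ to collapse it to $\mathrm{tr}(C^{T}C)=\|C\|_{F}^{2}$; this is the same computation organized differently. I note in passing that the Hamiltonian hypothesis on $C$ is never actually used, so the identity holds for an arbitrary $r\times r$ matrix $C$, and that the manipulations are written for real matrices, which is exactly the setting relevant to the application in Theorem~\ref{Hcond1}.
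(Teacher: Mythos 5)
Your proposal is correct and follows essentially the same route as the paper: both rest on the identity $X^{\mathcal{H}}(X^{\mathcal{H}})^{T}=I_{r}$ together with $\|M\|_{F}^{2}=\mathrm{tr}(M^{T}M)$ and cyclicity of the trace, and the direct trace expansion you mention in passing is literally the paper's proof. The one thing you add is the explicit verification $X^{\mathcal{H}}(X^{\mathcal{H}})^{T}=J_{r}X^{T}J_{n}J_{n}XJ_{r}=-J_{r}X^{T}XJ_{r}=I_{r}$, which the paper dismisses as ``easy to see,'' and your observation that the Hamiltonian hypothesis on $C$ is never used is accurate.
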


\begin{proof}
As $X^TX=I_r$, it is easy to see that $X^\mathcal{H}(X^\mathcal{H})^T=I_r$. Then
\begin{eqnarray*}
||XCX^\mathcal{H}||_F&=&\sqrt{trz[(XCX^\mathcal{H})^T(XCX^\mathcal{H})]}\\
&=&\sqrt{trz[(X^\mathcal{H})^TC^T(X^TX)CX^\mathcal{H}]}\\
&=&\sqrt{trz(CX^\mathcal{H}(X^\mathcal{H})^TC^T)}\\
&=&\sqrt{trz(CC^T)}=\sqrt{trz(C^TC)}\\
&=&||C||_F\\
\end{eqnarray*}
\end{proof}

\begin{corollary}
Let $A$ be an $n\times n$ real matrix and let $\alpha\geq 0$. Then $\gamma(A)\leq \alpha+2||C||$, where $C$ is a Hamiltonian matrix.
\end{corollary}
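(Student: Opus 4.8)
The plan is to chain together, in sequence, the three facts established immediately above: Theorem~\ref{Hcond1}, the Byers perturbation bound cited just before this statement, and Lemma~\ref{Hlemma}. We work under the standing hypothesis of this subsection, namely that $H(\alpha)$ has only real eigenvalues, since that is precisely the hypothesis Theorem~\ref{Hcond1} requires in order to manufacture the perturbation.

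First I would apply Theorem~\ref{Hcond1}: because every eigenvalue of $H(\alpha)$ is real, it supplies a $2\times 2$ Hamiltonian matrix $C$ together with a matrix $X$, whose columns are eigenvectors of $H(\alpha)$ of the form furnished by Theorem~\ref{aply1}, such that $K(\alpha)=H(\alpha)+XCX^{\mathcal H}$ has an eigenvalue on the imaginary axis. Setting $E=XCX^{\mathcal H}$, Lemma~\ref{lemma} certifies that $E$ is Hamiltonian, so $E$ is an admissible perturbation for the Byers bound.

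Next, since $K(\alpha)=H(\alpha)+E$ has an eigenvalue of zero real part and $E$ is Hamiltonian, the Byers perturbation theorem yields $\gamma(A)\le \alpha+2\|E\|$. It then remains only to replace $\|E\|$ by $\|C\|$, and this is exactly the content of Lemma~\ref{Hlemma}, which gives $\|XCX^{\mathcal H}\|_F=\|C\|_F$ as soon as the columns of $X$ are orthonormal, that is, $X^{T}X=I_r$. Granting that normalization, the two inequalities combine to give $\gamma(A)\le \alpha+2\|C\|$, as claimed.

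The crux of the argument, and the step that needs genuine attention rather than bookkeeping, is securing $X^{T}X=I_r$ so that Lemma~\ref{Hlemma} applies verbatim. The eigenvectors delivered by Theorem~\ref{aply1} have the shape $(\beta_k e_k;\,e_k)$, and a short check shows that the two vectors used in Theorem~\ref{Hcond1}, both built over $e_1$ with $\beta^{+}\beta^{-}=1$, are \emph{not} orthogonal; moreover rescaling them to unit length does not orthogonalize them. I would therefore resolve this either by (i) verifying that the construction of Theorem~\ref{Hcond1} can be run with two eigenvectors supported on distinct canonical directions $e_i,e_j$ with $i\neq j$, which are automatically orthogonal and, after scaling, produce $X^{T}X=I_2$ while remaining genuine eigenvectors of $H(\alpha)$; or by (ii) tracking the extra factor of $\|X\|$ that a non-orthonormal $X$ contributes to $\|E\|$ and absorbing it into the freedom available in the choice of $C$ (recall Theorem~\ref{Hcond1} only imposes $a^{2}+bc<0$). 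A secondary, routine point is to confirm that the norm appearing in the Byers bound and the Frobenius norm of Lemma~\ref{Hlemma} are taken compatibly, so that the two estimates may be chained without introducing an intervening constant.
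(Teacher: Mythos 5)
Your chain of results is exactly the paper's intended argument: the paper's own proof reads ``immediate from Theorem~\ref{Hcond1} and Lemma~\ref{Hlemma}'', with the Byers perturbation theorem applied silently in between, precisely as you apply it. Moreover, the worry you isolate as the crux is well founded and is in fact a gap in the paper's own proof, not just in yours: for the $X$ produced in Theorem~\ref{Hcond1} one has $\beta^{+}\beta^{-}=1$, so the Gram matrix is
$$
X^{T}X=\left(\begin{array}{cc}(\beta^{+})^{2}+1 & 2\\ 2 & (\beta^{-})^{2}+1\end{array}\right),
$$
and the off-diagonal entry $2$ cannot be removed by rescaling the columns (the relevant eigenspaces are one-dimensional, so scaling is the only freedom). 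Hence the hypothesis $X^{T}X=I_{r}$ of Lemma~\ref{Hlemma} is never satisfied by this $X$, and $\|E\|_F=\|C\|_F$ does not follow verbatim.

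However, your repair (i) fails, for a structural reason worth recording: if the two columns of $X$ are eigenvectors built over distinct canonical directions $e_{i}$ and $e_{j}$, $i\neq j$, then they are \emph{symplectically} orthogonal. A direct computation with $X^{\mathcal{H}}=J_{1}X^{T}J_{n}$ shows that every entry of $X^{\mathcal{H}}X$ contains a factor $e_{i}^{T}e_{j}=0$, so $X^{\mathcal{H}}X=0$; Theorem~\ref{RadoH} then gives $B=\Omega+CX^{\mathcal{H}}X=\Omega$, i.e.\ the perturbation $XCX^{\mathcal{H}}$, though nonzero, moves no eigenvalue at all, no eigenvalue reaches the imaginary axis, and the Byers theorem cannot be invoked. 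The mechanism of Theorem~\ref{Hcond1} genuinely requires the two eigenvectors attached to the \emph{pair} $\pm\sqrt{\lambda_{1}^{2}-\alpha^{2}}$, both supported on $e_{1}$ --- exactly the non-orthonormal case. So only your repair (ii) survives: estimate $\|XCX^{\mathcal{H}}\|_{F}\leq\|X\|_{2}^{2}\,\|C\|_{F}$ (using $\|X^{\mathcal{H}}\|_{2}=\|X\|_{2}$, since $J_{1},J_{n}$ are orthogonal), normalize the eigenvectors so that $\|X\|_{2}\leq 1$, and use the freedom in $C$ (the open condition $a^{2}+bc<0$ exploited in Theorem~\ref{Hcond1}) to retain an imaginary-axis eigenvalue; then $\|E\|_{2}\leq\|E\|_{F}\leq\|C\|_{F}$ and the Byers bound yields $\gamma(A)\leq\alpha+2\|C\|$, which also settles your norm-compatibility point since all inequalities run in the right direction. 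Alternatively, given how loosely the corollary quantifies ``where $C$ is a Hamiltonian matrix'', the bluntest rigorous reading is to take the matrix in the statement to be $E=XCX^{\mathcal{H}}$ itself, which is Hamiltonian by Lemma~\ref{lemma}, and for which the Byers theorem gives the bound directly.
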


\begin{proof}

The result is immediate from Theorem \ref{Hcond1} and Lemma \ref{Hlemma}.
\end{proof}

In the $ A $ non-diagonalizable case with real eigenvalues, it is always possible to perturb the matrix to obtain a Hamiltonian matrix of the type $ A + E $, just as this new matrix has at least one eigenvalue in the imaginary axis, in this sense we have the following result that is easy to verify. 

\begin{theorem}
  Let $\lambda$ be an eigenvalue of $H(\alpha)$. Then its associated eigenvector will have the form
  $$
  z=[\alpha x,\,\,\,\,\, (A-\lambda I)x]^T
  $$
where $x$ is the solution of the system
$$
[\alpha^2 I-(A^T+\lambda I)(A-\lambda I)]x=0
$$

\end{theorem}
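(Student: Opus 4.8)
The plan is to exploit the $2\times 2$ block structure of $H(\alpha)$ and reduce the $2n\times 2n$ eigenvalue equation to a single $n\times n$ system. First I would write a candidate eigenvector in block form as $z=\left[\frac{u}{v}\right]$ with $u,v\in\mathbb{C}^n$, so that the equation $H(\alpha)z=\lambda z$ splits into the two coupled equations $Au-\alpha v=\lambda u$ and $\alpha u-A^Tv=\lambda v$.

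The first of these reads $(A-\lambda I)u=\alpha v$. Assuming $\alpha\neq 0$ (the relevant case, since for $\alpha=0$ the matrix is block diagonal and the claimed form degenerates), I would introduce $x$ by setting $u=\alpha x$; then the first equation forces $v=(A-\lambda I)x$, which is precisely the asserted shape $z=[\alpha x,\,(A-\lambda I)x]^T$. Note that every $u$ arises this way via $x=u/\alpha$, so no generality is lost.

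It remains to substitute $u=\alpha x$ and $v=(A-\lambda I)x$ into the second equation, rewritten as $\alpha u=(A^T+\lambda I)v$. This yields $\alpha^2 x=(A^T+\lambda I)(A-\lambda I)x$, i.e. $[\alpha^2 I-(A^T+\lambda I)(A-\lambda I)]x=0$, which is the stated system. Conversely, any nonzero solution $x$ of this system produces, through the same two formulas, a genuine nonzero eigenvector of $H(\alpha)$ associated to $\lambda$, so the correspondence is exact.

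There is no serious obstacle here; the argument is a direct block computation, and the two verifications—that the first block equation is automatically satisfied by the parametrization, and that the second reproduces the singular system—are routine. The only point requiring care is the degenerate case $\alpha=0$, where $u=\alpha x=0$ and the description no longer captures the eigenvectors coming from the $-A^T$ block; I would therefore state the result under the standing hypothesis $\alpha\neq 0$, consistent with its use in the bisection method.
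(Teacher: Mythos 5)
Your proposal is correct: the paper offers no proof for this theorem (it is stated as ``easy to verify''), and your direct block computation --- splitting $H(\alpha)z=\lambda z$ into $(A-\lambda I)u=\alpha v$ and $\alpha u=(A^{T}+\lambda I)v$, then parametrizing $u=\alpha x$ --- is exactly the intended routine verification. Your observation that the hypothesis $\alpha\neq 0$ is needed (since for $\alpha=0$ eigenvectors of the form $[u,\,0]^{T}$ escape the claimed description) is a valid refinement the paper's statement silently omits.
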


\section{Conclusion}
In the study of systems as in (\ref{EQ}) it would be important to characterize the perturbations so that the eigenvalues ​​in the imaginary axis remain in the imaginary axis, this would allow a good estimation of the stability radius. In other words, we propose the following problem:

\begin{problem}
Given a matrix $A$ with purely imaginary eigenvalues, determine the smallest perturbation matrix $E$ such that $A + E$ matrix has eigenvalues outside the imaginary axis. In this way we want to determine the set of matrices $A$, such that those small perturbations that move away the imaginary eigenvalues of the imaginary axis are in a subset of measure zero within the set of real matrices.
\end{problem}


\section*{References}

\end{document}